\newtheorem{theorem}{Theorem}[section]
\newtheorem{lemma}{Lemma}
\renewcommand*{\c@lemma}{\c@theorem}
\renewcommand*{\p@lemma}{\p@theorem}
\renewcommand*{\c@conjecture}{\c@theorem}
\renewcommand*{\p@conjecture}{\p@theorem}
\newtheorem{proposition}{Proposition}
\renewcommand*{\c@proposition}{\c@theorem}
\renewcommand*{\p@proposition}{\p@theorem}
\newtheorem{corollary}{Corollary}
\renewcommand*{\c@corollary}{\c@theorem}
\renewcommand*{\p@corollary}{\p@theorem}
\renewcommand*{\c@observation}{\c@theorem}
\renewcommand*{\p@observation}{\p@theorem}
\theoremstyle{definition}
\newtheorem{problem}{Problem}
\renewcommand*{\c@problem}{\c@theorem}
\renewcommand*{\p@problem}{\p@theorem}
\renewcommand*{\c@definition}{\c@theorem}
\renewcommand*{\p@definition}{\p@theorem}
\newtheorem{remark}{Remark}
\renewcommand*{\c@remark}{\c@theorem}
\renewcommand*{\p@remark}{\p@theorem}
\renewcommand*{\c@example}{\c@theorem}
\renewcommand*{\p@example}{\p@theorem}
\newtheorem{algorithm}{Algorithm}
\renewcommand*{\c@algorithm}{\c@theorem}
\renewcommand*{\p@algorithm}{\p@theorem}
\numberwithin{equation}{section}
\let\bfseries=\undefined
\DeclareRobustCommand\bfseries
\newcommand{\Q}{{\mathbb Q}}
\newcommand{\Z}{{\mathbb Z}}
\newcommand{\R}{\mathbb R}
\def\IP{({\rm IP})_{N,\veb,\vel,\veu,f}}
\def\CP{({\rm CP})_{N,\veb,\vel,\veu,f}} % its continuous relaxation
\def\Graver{{\mathcal G}}
\def\Orthant_j{{\mathcal O}_{j}}
\def\Circuits{{\mathcal C}}
\def\ve#1{\mathchoice{\mbox{\boldmath$\displaystyle\bf#1$}}
{\mbox{\boldmath$\textstyle\bf#1$}}
{\mbox{\boldmath$\scriptstyle\bf#1$}}
{\mbox{\boldmath$\scriptscriptstyle\bf#1$}}}
\newcommand\vea{{\ve a}}
\newcommand\veb{{\ve b}}
\newcommand\vecc{{\ve c}}
\newcommand\ved{{\ve d}}
\newcommand\veg{{\ve g}}
\newcommand\vel{{\ve l}}
\newcommand\ver{{\ve r}}
\newcommand\veu{{\ve u}}
\newcommand\vev{{\ve v}}
\newcommand\vex{{\ve x}}
\newcommand\vey{{\ve y}}
\newcommand\vez{{\ve z}}
\newcommand\vebar[1]{\ve{\bar #1}}
\newcommand\vebarx{\vebar x}
\newcommand\vebary{\vebar y}
\newcommand\velambda{\ve{\lambda}}
\newcommand{\maO}{O}            % zero matrix
\newcommand{\DeclareBracket}[3]{
  \newcommand{#1}[2][]{%
  \ifthenelse%
  {\equal{##1}{}}%
  {\left#2##2\right#3}%
  {\csname ##1l\endcsname#2##2\csname ##1r\endcsname#3}}}
\DeclareBracket\bracket[]
  \newcommand\Ye{\cellcolor{yellow}}
  \newcommand\Gr{\cellcolor{LimeGreen}}
  \newcommand\Bl{\cellcolor{CornflowerBlue}}
  \newcommand\Ro{\cellcolor{Coral}}
\newenvironment{psmallmatrix}{\left(\smallmatrix}{\endsmallmatrix\right)}
\newcommand\FourBlockBig[5][\relax]{\begin{pmatrix}#2& #3\\#4&#5 \end{pmatrix}\ifx#1\relax\else^{(#1)}\fi}
\newcommand\FourBlock[5][\relax]{\begin{psmallmatrix}#2& #3\\#4&#5 \end{psmallmatrix}\ifx#1\relax\else{^{(#1)}}\fi}
\newcommand\NoBlock{\cdot}      % the 0x0 matrix.  (As an alternative, \circ
\newcommand\BigO{\mathrm{O}}
\begin{document}
\pagestyle{headings}  % switches on printing of running heads

\title[Graver basis and proximity techniques]{%% A polynomial-time algorithm for the
%%   $N$-fold $4$-block decomposable integer separable convex optimization
%%   problem
  Graver basis and proximity techniques
  for  block-structured separable convex\\ integer minimization problems}
\author{Raymond Hemmecke}
\address{Raymond Hemmecke: 
Zentrum Mathematik, M9, Technische Universit\"at M\"unchen, Boltzmannstr.\ 3, 
85747 Garching, Germany}
\email{hemmecke@ma.tum.de}
\author{Matthias K\"oppe}
\address{Matthias K\"oppe: Department of
  Mathematics, University of California,
  Davis, One Shields Avenue, Davis, CA, 95616, USA}
\email{mkoeppe@math.ucdavis.edu}
\author{Robert Weismantel}
\address{Robert Weismantel: Institute for Operations Research, 
ETH Z\"urich, R\"amistrasse 101, 8092 Zurich, Switzerland}
\email{robert.weismantel@ifor.math.ethz.ch}

\date{$\relax$Revision: 83 $ - \ $Date: 2012-07-04 17:48:28 -0700 (Wed, 04 Jul 2012) $ $}

\dedicatory{Dedicated to the memory of Uri Rothblum}

\maketitle

\begin{abstract}
  We consider $N$-fold $4$-block decomposable integer programs, which
  simultaneously generalize $N$-fold integer programs and two-stage stochastic
  integer programs with $N$ scenarios.  In previous work [R.~Hemmecke,
  M.~K{\"o}ppe, R.~Weismantel, \emph{A polynomial-time algorithm for
    optimizing over {$N$}-fold 4-block decomposable integer programs}, Proc.\
  IPCO 2010, Lecture Notes in Computer Science, vol. 6080, Springer, 2010,
  pp.~219--229], it was proved that for fixed blocks but variable~$N$, these
  integer programs are polynomial-time solvable for any linear objective.  We
  extend this result to the minimization of separable convex objective
  functions.  Our algorithm combines Graver basis techniques with a proximity
  result [D.S.~Hochbaum and J.G.~Shanthikumar, \emph{Convex separable
    optimization is not much harder than linear optimization}, J. ACM
  \textbf{37} (1990), 843--862], which allows us to use convex continuous
  optimization as a subroutine.
  
  {\bf Keywords:} $N$-fold integer programs, Graver basis, augmentation
  algorithm, proximity, polynomial-time algorithm, stochastic multi-commodity flow, stochastic integer programming
\end{abstract}

%\newpage
\section{Introduction}

We consider a family of  nonlinear integer minimization
problems over block-structured linear constraint systems
in variable dimension. The objective is to  minimize a  separable
convex objective function $f\colon \R^n \to \R$, defined as 
$$f(x_1,\ldots,x_n)= \sum_{i=1}^n f_i(x_i),$$ with convex functions $f_i\colon \R \to \R$
of one variable each. %% We assume that we have access to an oracle, that
%% queried on $x \in \Z^n$ returns the values $f_i(x_i)$ for all $i$. This then
%% allows us to compute $f(x)$, of course.  \texttt{(This is not consistent with
%%   the discussion later... --Matthias.)}
%% Some specific families of integer minimization problems defined by linear constraints and separable convex objective function have been studied in the literature. 

Hochbaum and Shanthikumar \cite{hochbaum-shanthikumar:1990:convex-separable} present a general technique for transforming algorithms for linear integer minimization to algorithms for separable convex integer minimization. The key ingredients  of this transformation technique are scaling techniques and proximity results between optimal integer solutions and optimal solutions of the continous relaxation.
This technique leads directly to polynomial time algorithms if all the
subdeterminants of the constraint matrix are bounded polynomially.  

Of course, this is quite a restrictive hypothesis, but 
an important corollary of this work is a polynomial time algorithm for minimizing a separable convex function over systems of inequalities associated with a unimodular matrix. This generalizes, in particular,  earlier work of Minoux \cite{Minoux1986} on minimum cost flows with separable convex cost functions.     
 
An impossibility result on the existence of a strongly polynomial algorithm for minimizing a general separable convex function over network flow constraints has been shown in \cite{Hochbaum1994}. \smallbreak

In the present paper, we study a certain family of block-structured separable
convex integer minimization problems over polyhedra, which does not satisfy
the hypothesis of polynomially bounded subdeterminants.  The constraint matrix
of these problems is
\emph{$N$-fold $4$-block decomposable} as follows:
\[
\FourBlockBig[N]CDBA :=
\begin{pmatrix}
C & D & D & \cdots & D \\
B & A & \maO  &   & \maO  \\
B & \maO  & A &   & \maO  \\
\vdots &   &   & \ddots &   \\
B & \maO  & \maO  &   & A
\end{pmatrix}
\]
for some given $N\in\Z_+$ and $N$ copies of $A$, $B$, and $D$. This problem
type was studied recently 
in~\cite{hemmecke-koeppe-weismantel:4-block}. 

$N$-fold $4$-block decomposable matrices arise in many contexts and have been
studied in various special cases, three of which are particularly relevant.  
We denote by $O$ a zero matrix of compatible dimensions and by $\NoBlock$ a
matrix with no columns or no rows. 

(i) For $C=\NoBlock $ and $D=\NoBlock $ we recover the problem matrix
$\FourBlock[N]\NoBlock\NoBlock{B}{A}$ of a two-stage
stochastic integer optimization problem. Then, $B$ is the matrix associated with the first stage decision variables and $A$ is associated with the decision on stage~2. The number of occurences of blocks of the matrix $A$ reflect all the possible scenarios that pop up once a first stage decision has been made.  We refer to \cite{Hemmecke:SIP2} for a survey on state of the art techniques to solve this problem.

(ii) For $B=\NoBlock $ and $C=\NoBlock $ we recover the problem
matrix~$\FourBlock[N]\NoBlock{D}\NoBlock{A}$ of a so-called $N$-fold integer
problem.  
Here, if we let $A$ be the node-edge incidence matrix of the given network and set $D$ to be the identity, then the resulting $N$-fold IP is a multicommodity network flow problem. Separable convex $N$-fold IPs can be solved in polynomial time, provided that the matrices $A$ and $D$ are fixed \cite{DeLoera+Hemmecke+Onn+Weismantel:08,Hemmecke+Onn+Weismantel:08}.

(iii) For totally unimodular matrices $C,A$ their so-called $1$-sum
$\FourBlock{C}{O}{O}{A}$ is totally unimodular. 
Similarly, total unimodularity is preserved under the so-called $2$-sum and $3$-sum composition \cite{Schrijver86,Seymour80}.   For example, for matrices $C$ and $A$, column vector $\vea$ and row vector
$\veb^\intercal$ of appropriate dimensions, the $2$-sum of
$\left(\begin{smallmatrix}C&\vea\\\end{smallmatrix}\right)$ and
$\left(\begin{smallmatrix}\veb^\intercal\\A\\\end{smallmatrix}\right)$ gives
$\left(\begin{smallmatrix}C&\vea\veb^\intercal\\
\maO &A\\\end{smallmatrix}\right)$. The \mbox{$2$-sum} of
$\left(\begin{smallmatrix}C&\vea\veb^\intercal & \ve a\\
\maO &A&\maO \\\end{smallmatrix}\right)$ and
$\left(\begin{smallmatrix}\veb^\intercal\\B\\\end{smallmatrix}\right)$ creates
the matrix
$$\left(\begin{smallmatrix}C&\vea\veb^\intercal&\vea\veb^\intercal\\\maO &
A&\maO \\\maO &\maO &A\\\end{smallmatrix}\right),$$ which is the $2$-fold
\mbox{$4$-block} decomposable matrix
$\left(\begin{smallmatrix}C&\vea\veb^\intercal\\
\maO &A\\\end{smallmatrix}\right)^{(2)}$.   Repeated application of certain $1$-sum, $2$-sum and \mbox{$3$-sum} compositions leads to a particular family of $N$-fold $4$-block decomposable matrices with special structure regarding the matrices $B$ and $D$.

\begin{figure}[t]
  \centering\small
  $\begin{array}{cccc@{\ }cccc@{\ }cccc@{\ }cc@{\ }lp{.3\linewidth}}
          \ve x^{\mathrm{agg}} && \ve x^1 & & & &  \ve x^2 & & & & \ve x^M
          & & & & & \textit{(Flows)} \\
          \\[-1ex]
          \Bl -I && \Gr I & \Gr &
          \Gr  & & \Gr  I &
          \Gr  & \Gr  & \cdots & \Gr  I & \Gr & \Gr & = & \ve 0 & Flow aggregation
          \\
          \\[-1.5ex]
          \Ro && \Ye A & \Ye  & \Ye  & &&&&&&&& = & \ve b_1 & Flow commodity 1 \\
          \Ro I && \Ye  & \Ye I & \Ye -I & &&&&&&&& = & \ve u_1 & Capacity scenario 1\\
          \\[-1.5ex]
          %\hline
          \Ro && &&&& \Ye A & \Ye  & \Ye  & &&&& = & \ve b_2 & Flow commodity 2\\
          \Ro I && &&&& \Ye  & \Ye I &\Ye -I & &&&& = & \ve u_2 & Capacity scenario 2\\
          \vdots & &&&& &&&& \ddots\\
          \Ro & &&&& &&&& & \Ye A & \Ye  & \Ye  & = & \ve b_M & Flow commodity $M$ \\
          \Ro I & &&&& &&&& & \Ye & \Ye I & \Ye -I & = & \ve u_N & Capacity
          scenario $N$\\
          \\[-1ex]
          &&& \ve s_1 & \ve t_1 &&&\ve s_2 & \ve t_2 &&& \ve s_N & \ve t_N &
          & & \textit{(Slack/Excess)}
        \end{array}$
  \caption{Modeling a two-stage stochastic integer multi-commodity flow problem as an $N$-fold $4$-block decomposable problem.  Without loss of generality, the number of commodities and the number of scenarios are assumed to be equal.}
  \label{fig:2stage-multicommodity}
\end{figure}

(iv) The general case appears in 
stochastic integer programs with second order dominance relations
\cite{Gollmer+Gotzes+Schultz} and 
stochastic integer multi-commodity flows.
See \cite{hemmecke-koeppe-weismantel:4-block} for further details of the model
as an $N$-fold $4$-block decomposable problem. 
To give one example consider a
stochastic integer multi-commodity flow problem, introduced in
\cite{Mirchandani+Soroush,Powell+Topaloglu}. Let  $M$ integer (in contrast to
continuous) commodities to be transported over a given network. While we
assume that supply and demands are deterministic, we assume that the upper
bounds for the capacities per edge are uncertain and given initially only via
some probability distribution. In a first stage we have to decide how to
transport the $M$ commodities over the given network without knowing the true
capacities per edge. Then, after observing the true capacities per edge,
penalties have to be paid if the capacity is exceeded. Assuming that we have
knowledge about the probability distributions of the uncertain upper bounds,
we wish to minimize the costs for the integer multi-commodity flow plus the
expected penalties to be paid for exceeding capacities. To solve this problem,
we discretize as usual the probability distribution for the uncertain upper
bounds into $N$ scenarios. Doing so, we obtain a (typically large-scale)
(two-stage stochastic) integer programming problem as shown in Figure~\ref{fig:2stage-multicommodity}.
Herein, $A$ is the node-edge incidence matrix of the given network, $I$ is an
identity matrix of appropriate size, and the columns containing $-I$
correspond to the penalty variables. 

%%%%%%%%%%%%%%%%%%%%%%%%%%%%%%%%%%%%%%%%%%%%%%%%%%%%%%%%%%%%%%%%%%%%%%%%%%%%%%%%%%%%%%%%%%%%%%%%%%%%%%%%%%%%%%%%%%%%%%%

\section{Main results and proof outline}

In \cite{hemmecke-koeppe-weismantel:4-block}, the authors proved the following
result. 
\begin{theorem}\label{Theorem: ABCD}
Let $A\in\Z^{d_A\times n_A}$, $B\in\Z^{d_A\times n_B}$, $C\in\Z^{d_C\times
  n_B}$, $D\in\Z^{d_C\times n_A}$ be fixed matrices. For given $N\in\Z_+$ let
$\vel\in(\Z\cup\{-\infty\})^{n_B+Nn_A}$,
$\veu\in(\Z\cup\{+\infty\})^{n_B+Nn_A}$, $\veb\in\Z^{d_C+Nd_A}$, and let 
$f\colon\R^{n_B+Nn_A}\rightarrow\R$ be a separable convex function 
that takes integer values on~$\Z^{n_B+Nn_A}$
and denote by
$\hat f$ an upper bound on the maximum of $|f|$ over the feasible region of
the $N$-fold 
$4$-block decomposable convex integer minimization problem
\[
\IP\colon\qquad\min\left\{f(\vez):\FourBlock[N]{C}{D}{B}{A}\vez=\veb,\;\vel\leq\vez\leq\veu,\;\vez\in\Z^{n_B+Nn_A}\right\}.%
\]%
We assume that $f$ is given only by a comparison oracle that, when queried on $\vez$ and $\vez'$ decides whether $f(\vez)<f(\vez')$, $f(\vez)=f(\vez')$ or $f(\vez)>f(\vez')$. Then the following holds:
\begin{enumerate}[\rm(a)]
\item There exists an algorithm with input $N$, $\vel,\veu$, $\veb$ that
  computes a feasible solution to $\IP$ or 
  decides that no such solution exists and that runs in time polynomial in $N$
  and in the binary encoding lengths $\langle \vel,\veu,\veb\rangle$.
\item There exists an
  algorithm  with input $N$, $\vel,\veu$, $\veb$ and a feasible solution
  $\vez_0$ to $\IP$ that decides whether $\vez_0$ is optimal or finds a better
  feasible solution $\vez_1$ to the problem~$\IP$ with $f(\vez_1)<f(\vez_0)$
  and that runs in time polynomial in $N$ and in the binary encoding lengths
  $\langle \vel,\veu,\veb,\hat f\rangle$. 
\item For the restricted problem where $f$ is linear, 
  there exists an algorithm with input $N$, $\vel,\veu$, $\veb$ that finds an optimal
  solution to the problem $\IP$ or decides that $\IP$ is infeasible or unbounded and
  that runs in time polynomial in $N$ and in the binary encoding lengths
  $\langle \vel,\veu,\veb,\hat f\rangle$. 
\end{enumerate}
\end{theorem}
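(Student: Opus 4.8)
The plan is to route all three parts through a single object, the Graver basis $\Graver(M_N)$ of the constraint matrix $M_N:=\FourBlock[N]{C}{D}{B}{A}$, and to exploit that it is a test set for separable convex minimization: a feasible $\vez_0$ is optimal if and only if there is no $\veg\in\Graver(M_N)$ with $\vel\le\vez_0+\veg\le\veu$ and $f(\vez_0+\veg)<f(\vez_0)$. This step-length-one criterion holds because any $\vez^*-\vez_0$ admits a conformal decomposition into Graver elements, along one of which separable convexity forces an improvement. Thus parts (a)--(c) all reduce to searching $\Graver(M_N)$ efficiently, which is the real content.

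The main obstacle, and the technical heart, is a bound on the Graver complexity that is uniform in $N$: for fixed $A,B,C,D$ there is a constant $G=G(A,B,C,D)$ such that every $\veg=(\veg^0,\veg^1,\dots,\veg^N)\in\Graver(M_N)$ satisfies $\|\veg\|_\infty\le G$, first-stage part included. I would prove this by combining, and extending to the full block structure, the two finiteness phenomena already visible in the special cases: the finiteness theorem for two-stage stochastic matrices, which bounds $\veg^0$ together with the bricks constrained by $B\veg^0+A\veg^i=\ve 0$, and the $N$-fold Graver-complexity bound governing the top coupling $C\veg^0+D\sum_i\veg^i=\ve 0$. The delicate point is that, in contrast to the pure $N$-fold situation, a Graver element of $M_N$ may have all $N$ bricks nonzero, so one cannot bound their \emph{number}, only their individual sizes.

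Granting $G$, I would realise the augmentation oracle of part (b) as a dynamic program. Because $n_B$ and $n_A$ are constants, the first-stage part $\veg^0$ ranges over a constant-size set, and for each fixed $\veg^0$ every brick lies in the constant-size menu $\set{\vew : A\vew=-B\veg^0,\ \|\vew\|_\infty\le G,\ \vel^i\le\vez_0^i+\vew\le\veu^i}$. The only remaining global constraint is $\sum_i D\veg^i=-C\veg^0$, a constant number $d_C$ of aggregated equations whose partial sums stay in a box of width $\BigO(N)$ per coordinate. For each $\veg^0$ I would then sweep $i=1,\dots,N$ with a state recording the running aggregate $\sum_{i'\le i}D\veg^{i'}\in\Z^{d_C}$, giving $\BigO(N^{d_C})$ states, and store in each state a partial direction of least cost. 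Since $f$ is separable and two prefix candidates sharing the same $\veg^0$ and a zero-padded tail differ only on the blocks being compared, the comparison oracle alone drives the recursion; comparing the best completion with $\veg=\ve 0$ decides optimality or returns an improving $\vez_1$.

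For part (c) the objective is the explicit linear form $\vecc^\intercal\vez$, so the same dynamic program minimises the separable linear function $\vecc^\intercal\veg$ without any oracle, and for the resulting best bounded-norm direction a one-dimensional line search returns the maximal feasible step, an infinite step certifying unboundedness. To keep the number of augmentations polynomial I would drive these most-improving steps by a bit-scaling scheme on $\vecc$ in the spirit of Hochbaum--Shanthikumar, each scaling phase requiring only the bounded-norm directions the oracle already supplies. Part (a) then follows by applying part (c) to an auxiliary phase-I problem minimising total constraint violation: adjoining a constant number of artificial columns to each block keeps $M_N$ $N$-fold $4$-block decomposable with fixed blocks and furnishes an obvious starting solution, and the original system is feasible exactly when this auxiliary optimum is zero.
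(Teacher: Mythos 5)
Your overall architecture---reduce everything to a Graver-basis augmentation oracle, enumerate the first-stage part $\veg^0$, and optimize the bricks via a sweep over the aggregated sums $\sum_i D\veg^i$---is close in spirit to the paper's proof, which enumerates the candidates for the $\vebarx$-part and, for each one, solves an $N$-fold IP with matrix $\FourBlock[N]\NoBlock{D}\NoBlock{A}$. But there is a genuine gap at exactly the point you yourself call the technical heart: you assert a \emph{constant} bound $G=G(A,B,C,D)$, uniform in $N$, on $\|\veg\|_\infty$ for $\veg\in\Graver\left(\FourBlock[N]{C}{D}{B}{A}\right)$, ``first-stage part included,'' and offer only the remark that it should follow by combining the finiteness phenomena of the two special cases. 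No such constant bound is known, and the paper explicitly stresses that the constant-norm phenomenon does \emph{not} persist: the $\ell_1$-norm is constant for $N$-fold IPs, linear in $N$ for two-stage stochastic IPs, but for the full $4$-block matrix the best available bounds (\autoref{Theorem: Polynomial bound for fixed ABCD on increase of Graver degree}, via \autoref{Proposition: Polynomial bound for fixed ABCD on increase of Graver degree} or \autoref{Proposition: Alternative polynomial bound for fixed ABCD on increase of Graver degree}) are polynomials in $N$ of degree $2^{d_C}+1$ resp.\ $d_C+\eta$. These are obtained by writing $\vev=\Graver(L)\velambda$ with $\velambda\in\Graver(F\cdot\Graver(L))$, where $\|\velambda\|_1$ grows with $N$; since the first-stage part is the sign-compatible sum $\sum_i\lambda_i\vex_i$, even $\|\veg^0\|_\infty$ has no known bound independent of~$N$. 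Your constant-size candidate set for $\veg^0$ and constant-size brick menus therefore rest on an unproved, and most likely false, premise. The construction can be repaired---with the polynomial $\ell_1$-bound the candidate set for $\veg^0$, the brick menus, and your $\BigO(N^{d_C})$-type state space all remain of polynomial size because $n_A$, $n_B$, $d_C$ are fixed---but then the polynomial norm bound itself is the missing proof, and it is the hardest part of the whole argument.

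Two smaller issues. First, the step-length-one test-set criterion guarantees finite termination but not polynomially many augmentations; the paper obtains the polynomial step count via the directed augmentation technique, whose subproblems minimize the piecewise-linear separable functions $\vecc^\intercal\vev^+ + \ved^\intercal\vev^-$, and your appeal to bit-scaling on $\vecc$ is insufficient here, since (as the paper notes) bit-scaling alone yields polynomial step counts only for $0/1$ problems. Second, for part (c) an improving direction of step length one is not enough to drive a greatest-improvement analysis or to certify unboundedness; you need to optimize over steps $\gamma\veg$ with $\gamma\in\Z_+$ as well, which your dynamic program as described does not do.
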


This theorem generalizes a similar statement for $N$-fold integer programming
and for two-stage stochastic integer programming. In these two special cases,
one can even prove claim (c) of Theorem \ref{Theorem: ABCD} for all separable
convex functions and for a certain class of separable convex functions,
%% \texttt{(What is that ``certain class''?? --Matthias)},
respectively.  
In~\cite{hemmecke-koeppe-weismantel:4-block}, it was posed as an open question
whether Theorem \ref{Theorem: ABCD} can be extended, for the full class of
$N$-fold $4$-block decomposable problems, from linear $f$ to general separable
convex functions~$f$.

In the present paper, we settle this question, proving the following result
for separable convex functions~$f$, for which we assume that the following
approximate continuous convex optimization oracle is available:
\begin{problem}[Approximate continuous convex optimization]
  Given the data $A$, $B$, $C$, $D$, $N$, $\vel,\veu$, $\veb$ and a number
  $\epsilon\in\Q_{>0}$, 
  find a feasible solution $\ve{r}_\epsilon \in \Q^{n_B+Nn_A}$ for the continuous
  relaxation
  \[
  \CP\colon\qquad\min\left\{f(\ver):\FourBlock[N]{C}{D}{B}{A}\ver=\veb,\;\vel\leq\ver\leq\veu,\;\ver\in\R^{n_B+Nn_A}\right\}.%
  \]%
  such that there exists an optimal solution $\ve{\hat r}$ to $\CP$ with
  \begin{displaymath}
    \|\ve{\hat r} - \ve{r}_\epsilon\|_\infty \leq \epsilon,
  \end{displaymath}
  or report \textsc{Infeasible} or \textsc{Unbounded}. 
\end{problem}

\begin{theorem}\label{Theorem: ABCD convex}
  For the problem of Theorem~\ref{Theorem: ABCD}, 
  we assume that the objective function~$f$ is given by an evaluation oracle
  and an approximate continuous convex optimization oracle for $\CP$.
  
  Then there exists an algorithm
  that finds an optimal solution to $\IP$ or decides that $\IP$ is infeasible
  or unbounded and that runs in time polynomial in $N$ and in the binary encoding
  lengths $\langle \vel,\veu,\veb,\hat f\rangle$.
\end{theorem}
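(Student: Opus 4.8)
The plan is to drive the convex problem through the \emph{linear} exact solver of Theorem~\ref{Theorem: ABCD}(c) by means of the proximity–scaling scheme of Hochbaum and Shanthikumar~\cite{hochbaum-shanthikumar:1990:convex-separable}, using the approximate continuous oracle for $\CP$ as the subroutine that supplies the proximity centers. The guiding observation, which is what makes the failure of polynomially bounded subdeterminants harmless, is the following: every subdeterminant of $\FourBlock[N]{C}{D}{B}{A}$ is an integer bounded by the Hadamard bound, so although its \emph{magnitude} may grow exponentially in $N$, its \emph{binary encoding length} is only $\BigO(N)$ times a quantity depending on the fixed blocks, hence polynomial. Since Theorem~\ref{Theorem: ABCD}(c) runs in time polynomial in the encoding length of its input (and never enumerates the box it optimizes over), all the linear subproblems produced below are solved in polynomial time even though the boxes involved may contain exponentially many lattice points.

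First I would settle feasibility and boundedness. Running the algorithm of Theorem~\ref{Theorem: ABCD}(a) either certifies that $\IP$ is infeasible, in which case we stop, or returns a feasible point $\vez_0$. I then call the continuous oracle: it cannot report \textsc{Infeasible} once $\vez_0$ exists, and if it reports \textsc{Unbounded} I argue that $\IP$ is unbounded as well, since a rational recession direction of the continuous feasible region along which $f$ decreases can be cleared to an integral direction $\ved$, and convexity with separability then forces $f(\vez_0+k\ved)\to-\infty$ as $k\to\infty$ while $\vez_0+k\ved$ stays integral and feasible; conversely, as the integer feasible region is contained in the continuous one, boundedness of $\CP$ yields boundedness of $\IP$. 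After this step we may assume $\IP$ is feasible with finite optimum.

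The core is then a proximity–scaling loop. I would first call the continuous oracle on $\CP$ with a small $\epsilon\le 1$ to obtain $\ver_\epsilon$ near some continuous optimum $\hat\ver$, and invoke a Hochbaum–Shanthikumar-type proximity bound $\|\vez^*-\hat\ver\|_\infty\le n\Delta$ to confine an integer optimum $\vez^*$ to a box around $\ver_\epsilon$ of radius $n\Delta+1$; crucially this box has \emph{polynomial encoding length} by the observation above, which in particular tames infinite original bounds $\vel,\veu$. Intersecting with $[\vel,\veu]$ gives finite bounds $\vel',\veu'$ of polynomial encoding length, and I scale over $s=2^K,2^{K-1},\dots,1$ with $K=\lceil\log_2(\text{width of the localized box})\rceil=\mathrm{poly}$. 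At scale $s$, starting from the current feasible $\vez_s$, I replace $f$ by its marginal (forward/backward difference) linearization at step size $s$, whose coefficients $f_i(x_i\pm s)-f_i(x_i)$ are read off the evaluation oracle and are bounded by $2\hat f$; a further proximity estimate bounds the increment needed in passing from scale $s$ to $s/2$ by a multiple of $s$ times the maximal subdeterminant, again of polynomial encoding length, giving an increment box. Writing the refinement as $\vez_s+s\,\vey$ with $E\vey=\ve 0$ over that box, I solve this \emph{linear} $N$-fold $4$-block integer program by Theorem~\ref{Theorem: ABCD}(c) to obtain $\vez_{s/2}$. Since the constraint matrix is unchanged, each solve is polynomial, there are only $K+1=\mathrm{poly}$ scales, and the proximity guarantees that the point produced at $s=1$, where the marginal linearization is exact by convexity, is an exact optimum of $\IP$.

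I expect the main obstacle to be the proximity step: one must verify that a Hochbaum–Shanthikumar proximity bound genuinely holds for the continuous and integer optima of the $N$-fold $4$-block problem (and between consecutive scales), and, more importantly, phrase it so that only the \emph{encoding length} of the subdeterminants—never their magnitude—enters the running time, as the magnitudes are exponential in $N$. A secondary technical point is the oracle interface: the continuous oracle returns only an $\epsilon$-approximation, so the localization box must be inflated by $\epsilon$ and $\epsilon$ chosen polynomially small enough that rounding to the scaling lattice cannot discard the true integer optimum, and the marginal linearization must be justified as faithfully representing the separable convex objective at each scale so that the final $s=1$ solve returns a genuine optimum rather than a merely locally linear one.
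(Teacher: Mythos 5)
There is a genuine gap, and it sits exactly where you flag your ``main obstacle'': the dependence on the magnitude, not the encoding length, of the subdeterminants. In the Hochbaum--Shanthikumar proximity--scaling scheme, the quantity $n\Delta$ does not merely appear in the numbers you write down; it determines the \emph{size of the search region} at each scale. The proximity estimate between consecutive scales confines the refinement to a box containing $\BigO(n\Delta)$ lattice points per coordinate (in units of the current step size), and to represent the separable convex objective by a \emph{linear} one on that box you must split each variable into $\BigO(n\Delta)$ unit pieces, each carrying its own marginal cost $f_i(x_i+(j{+}1)s)-f_i(x_i+js)$ --- a single linear piece $f_i(x_i\pm s)-f_i(x_i)$ is only faithful for moves of one step, while the refinement requires moves of up to $\BigO(n\Delta)$ steps. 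Since $\Delta$ grows exponentially in $N$ for $N$-fold $4$-block matrices, the split formulation has exponentially many variables and the enumeration implicit in the scaling loop is exponential; no amount of appealing to the polynomial \emph{encoding length} of $\Delta$ repairs this, because Theorem~\ref{Theorem: ABCD}(c) is polynomial in the dimension of the instance you hand it. This is precisely why the paper states that the results of \cite{hochbaum-shanthikumar:1990:convex-separable} cannot be applied directly here.

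The missing idea is to replace $\Delta$ in the proximity bound by the maximum $\ell_\infty$-norm of the Graver basis elements of $\FourBlock[N]{C}{D}{B}{A}$, which --- unlike $\Delta$ --- is bounded \emph{in magnitude} by a polynomial in $N$ (Propositions~\ref{Proposition: Polynomial bound for fixed ABCD on increase of Graver degree} and \ref{Proposition: Alternative polynomial bound for fixed ABCD on increase of Graver degree}). The paper proves its own proximity theorem (Theorem~\ref{th:proximity}) with $n\cdot\max\{\|\vev\|_\infty:\vev\in\Graver(E)\}$ on the right-hand side, via a conformal circuit decomposition of $\ve{\hat r}-\ve{\hat z}$ and the superadditivity of separable convex functions. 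With that bound, a \emph{single} call to the approximate continuous oracle localizes an integer optimum in a box of polynomially bounded width $k$, no scaling loop is needed, and the restricted problem is solved directly by Graver-best augmentation (Theorem~\ref{Theorem: Restricted optimization over 4-block N-fold matrix in poly-time}), where the small $k$ is what makes the enumeration over step lengths $\gamma=1,\dots,k$ and over the $\ve{\bar x}$-parts of Graver elements polynomial. Your feasibility/unboundedness preprocessing and the $\epsilon$-inflation of the box are fine and match the paper; the proof collapses without the Graver-norm proximity bound.
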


The main new technical contribution of the present paper is to combine Graver
basis techniques with a proximity result developed by Hochbaum and
Shanthikumar~\cite{hochbaum-shanthikumar:1990:convex-separable} in the context
of their so-called proximity-scaling technique.

This allows us to first use the approximate continuous convex optimization
oracle to find 
a point, in whose proximity the optimal integer solution has to lie.
The integer problem restricted to this neighborhood is then 
efficiently solvable with primal (augmentation) algorithms using Graver
bases, which will find 
the optimal integer solution in a polynomial number of steps.

\bigbreak

We now briefly explain the Graver basis techniques; we refer the reader to the
survey paper \cite{Onn:survey} or the monograph~\cite{onn:nonlinear-discrete-monograph} for more details.
  Let $E\in\Z^{d\times n}$
be a matrix. We associate with $E$ a finite set $\Graver(E)$ of vectors with
remarkable properties. Consider the set $\ker(E)\cap\Z^n$. Then we put into
$\Graver(E)$ all nonzero vectors $\vev\in\ker(E)\cap\Z^n$ that cannot be
written as a sum $\vev=\vev'+\vev''$ of nonzero vectors
$\vev',\vev''\in\ker(E)\cap\Z^n$ that lie in the same orthant (or
equivalently, have the same sign pattern in $\{\geq 0,\leq 0\}^n$) as
$\vev$. The set $\Graver(E)$ has been named the \emph{Graver basis} of $E$,
since Graver \cite{Graver75} introduced this set $\Graver(E)$ in $1975$ and
showed that it constitutes an optimality certificate (test set) for the family
of integer linear programs that share the same problem matrix,~$E$. By this we
mean that $\Graver(E)$ provides an augmenting vector for any non-optimal
feasible solution and hence allows the design of a simple augmentation
algorithm to solve the integer linear program in a finite number of
augmentations. 

The augmentation technique can also be used to efficiently construct a
feasible solution in the first place, in a procedure similar to phase~I of the
simplex algorithm~\cite{Hemmecke:2003b}.

More recently, it has been shown in \cite{MurotaSaitoWeismantel04} that
$\Graver(E)$ constitutes an optimality 
certificate for a wider class of integer minimization problems, namely for
those minimizing a % concave or a
separable convex objective function over
a feasible region of the form
$$\{\,\vez:E\vez=\veb,\;\vel\leq\vez\leq\veu,\;\vez\in\Z^n\,\}.$$

Moreover, several techniques have been found to turn the augmentation
algorithm into an efficient algorithm, bounding the number of augmentation
steps polynomially.  Three such speed-up techniques are known in the literature:  
For 0/1 integer linear problems, a simple \emph{bit-scaling technique} suffices
\cite{SchulzWeismantelZiegler95}.  
For general integer linear problems, one can use the \emph{directed augmentation
technique} \cite{SchulzWeismantel99}, in which one uses Graver basis
elements~$\ve v\in\Graver(E)$ that are improving directions for the nonlinear functions $\ve
c^\intercal \ve v^+ + \ve d^\intercal \ve v^-$, which are adjusted during the
augmentation algorithm.  
For separable convex integer problems, one can use the \emph{Graver-best
augmentation technique} \cite{Hemmecke+Onn+Weismantel:08}, where one uses
an augmentation vector~$\ve v$  
that is at least as good as the best  
augmentation step of the form $\gamma\ve g$ with $\gamma\in\Z_+$ and
$\veg\in\Graver(E)$.\smallbreak

In~\cite{hemmecke-koeppe-weismantel:4-block}, the authors found a way to
implement the directed augmentation technique efficiently for 
$N$-fold $4$-block decomposable integer programs, despite the
exponential size of the Graver basis.  This gives an efficient optimization
algorithm for the case of linear objective functions,
proving~\autoref{Theorem: ABCD}.  
It is still an open question whether the Graver-best augmentation technique
can be implemented efficiently.  This would give an alternative proof of
\autoref{Theorem: ABCD convex}.

The paper~\cite{hemmecke-koeppe-weismantel:4-block} and the present paper
crucially rely on the following structural result about
$\Graver\left(\FourBlock[N]{C}{D}{B}{A}\right)$, which was proved
in~\cite{hemmecke-koeppe-weismantel:4-block}.
\begin{theorem}\label{Theorem: Polynomial bound for fixed ABCD on increase of Graver degree}
If $A\in\Z^{d_A\times n_A}$, $B\in\Z^{d_A\times n_B}$, $C\in\Z^{d_C\times n_B}$, $D\in\Z^{d_C\times n_A}$ are fixed matrices, then $\max\left\{\,\|\vev\|_1:\vev\in\Graver\left(\FourBlock[N]{C}{D}{B}{A}\right)\,\right\}$ is bounded by a polynomial in $N$.
\end{theorem}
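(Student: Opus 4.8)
The plan is to bound $\|\vev\|_1$ for a Graver basis element $\vev$ by exploiting its conformal minimality together with the finiteness of the Graver bases of the fixed constituent matrices, most notably $A$ and the pure $N$-fold matrix assembled from $A$ and $D$. First I would record the kernel structure. Writing $\vev=(\vev^0,\vev^1,\dots,\vev^N)$, with $\vev^0\in\Z^{n_B}$ in the columns of $B,C$ and $\vev^i\in\Z^{n_A}$ in the $i$-th copy of $A$, membership in $\ker\FourBlock[N]{C}{D}{B}{A}$ is equivalent to
\[
A\vev^i=-B\vev^0\quad(i=1,\dots,N),\qquad C\vev^0+D\textstyle\sum_{i=1}^N\vev^i=\ve 0.
\]
In particular all second-stage blocks $\vev^i$ lie in a single coset of the integer lattice $\ker(A)\cap\Z^{n_A}$, determined by $\vev^0$. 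This already explains why only a polynomial (not a constant) bound can hold: as soon as $B\vev^0\neq\ve 0$ no block $\vev^i$ can vanish, so a Graver element is supported on \emph{all} $N$ blocks, in sharp contrast to the pure $N$-fold case, where the number of nonzero blocks is bounded by a constant.

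Next I would identify the conformal moves forbidden by minimality. For any $(\vez^1,\dots,\vez^N)$ with $A\vez^i=\ve 0$ for all $i$ and $D\sum_i\vez^i=\ve 0$ — that is, any element of the kernel of the pure $N$-fold matrix $M_N:=\FourBlock[N]{\NoBlock}{D}{\NoBlock}{A}$ — the padded vector $(\ve 0,\vez^1,\dots,\vez^N)$ again lies in $\ker\FourBlock[N]{C}{D}{B}{A}$. Hence, if such a $\vez$ were conformal to $\vev$ in the second-stage coordinates, we could split off $(\ve 0,\vez)$, contradicting conformal minimality. Two consequences follow: taking $\vez^i=\ved$, $\vez^j=-\ved$ with $\ved\in\ker(A)\cap\Z^{n_A}$ shows that no nonzero lattice vector of $A$ may be transferred conformally between two blocks; and, more generally, the second-stage part $(\vev^1,\dots,\vev^N)$ contains no element of $\Graver(M_N)$ conformally. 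By the structure theory of pure $N$-fold matrices, whose Graver complexity is a constant, the elements of $\Graver(M_N)$ are uniformly small and sparse — each supported on a bounded number of blocks, each block of norm bounded in terms of $A$ and $D$ only. Feeding this ``no conformal Graver element of $M_N$'' condition through a pigeonhole argument over the finitely many sign patterns and $\Graver(A)$-types is intended to bound the norm of each individual block $\vev^i$ uniformly in $N$.

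It remains to bound the first-stage block $\vev^0$. Here I would run a second, higher-level argument: decomposing each coupled pair $(\vev^0,\vev^i)\in\ker([\,B\mid A\,])\cap\Z^{\,n_B+n_A}$ over the finite set $\Graver([\,B\mid A\,])$, and recording for each $i$ the multiset of first-stage parts used, reduces the task to a finiteness statement for an auxiliary matrix assembled from $\Graver([\,B\mid A\,])$ together with $C$ and $D$ — the analogue of the Graver complexity for this two-layer structure. Minimality again forbids conformally subtracting any kernel element of this auxiliary object, which should bound $\|\vev^0\|_1$ uniformly in $N$. Combining the two bounds, $\|\vev\|_1=\|\vev^0\|_1+\sum_{i=1}^N\|\vev^i\|_1$ is at most a constant plus $N$ times a constant, hence (linear, and in any case) polynomial in $N$.

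The main obstacle is precisely the interaction of the two-stage coupling matrix $B$ with the aggregation matrix $D$. Either block alone is well understood — $B$ alone yields a two-stage stochastic matrix and $D$ alone a pure $N$-fold matrix, both with constant Graver bounds — but together they force all $N$ second-stage blocks to be simultaneously active while still constraining their aggregate through $C$ and $D$. The delicate point is therefore to establish that the per-block norms and the first-stage norm remain bounded \emph{uniformly in $N$}; proving the requisite finiteness of the auxiliary Graver-complexity object in the presence of both couplings, and thereby controlling how the coset condition ($A,B$) and the aggregation condition ($C,D$) can jointly inflate the blocks, is where the real work lies.
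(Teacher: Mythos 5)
Both load-bearing steps of your argument are announced rather than carried out, and the first one fails as stated. You argue that since no element of $\Graver\bigl(\FourBlock[N]{\NoBlock}{D}{\NoBlock}{A}\bigr)$ can sit conformally inside the second-stage part $(\vev^1,\dots,\vev^N)$, a pigeonhole over sign patterns and $\Graver(A)$-types bounds each block $\|\vev^i\|_1$ uniformly in $N$. But $(\vev^1,\dots,\vev^N)$ is \emph{not} in the kernel of that pure $N$-fold matrix: it lies in the nontrivial coset determined by $A\vev^i=-B\vev^0$ and $D\sum_i\vev^i=-C\vev^0$, and a large integer vector in a coset of a lattice need not conformally contain any nonzero kernel element at all (already $A=(1)$, $\vev^1=(1000)$ shows this). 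So ``no conformal kernel element of the $N$-fold matrix'' yields no norm bound on the blocks, and indeed no argument bounding $\|\vev^i\|_\infty$ by a constant is given. Your second step openly defers to ``the requisite finiteness of the auxiliary Graver-complexity object'' built from $\Graver([\,B\mid A\,])$, $C$ and $D$ --- but that finiteness \emph{is} the theorem; nothing in the known two-stage or $N$-fold structure theory supplies it once both couplings are present. A further warning sign is that your conclusion, a bound linear in $N$, is strictly stronger than what the paper proves ($\BigO(N^{2^{d_C}+1})$, resp.\ $\BigO(N^{d_C+\eta})$); the paper explicitly reserves the linear bound for the pure two-stage stochastic case and emphasizes that the $4$-block case is harder, so a proof landing on a linear bound should be treated with suspicion.

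The paper's route avoids the block-by-block analysis entirely. It writes $\FourBlock[N]{C}{D}{B}{A}$ as the two-stage stochastic matrix $L=\FourBlock[N]{\NoBlock}{\NoBlock}{B}{A}$ with $d_C$ extra rows $F=(C,D,\dots,D)$ stacked on top. A saturation result from commutative algebra (Theorem~\ref{Lemma: Finite bound for SIP Graver bases}) gives $\|\vev\|_\infty\leq g$ for $\vev\in\Graver(L)$ with $g$ independent of $N$, hence $\|\vev\|_1\leq (n_B+Nn_A)g$. Then a stacking lemma (Lemma~\ref{Lemma: General bound on increase of Graver degree}) expresses elements of $\Graver\binom{F}{L}$ as sign-compatible combinations $\Graver(L)\velambda$ with $\velambda\in\Graver(F\cdot\Graver(L))$, and the primitive-partition-identity bound (Lemma~\ref{Lemma: PPI degree bound}) controls $\|\velambda\|_1$ when one row is added; iterating over the $d_C$ rows roughly squares the bound each time, which is where the exponent $2^{d_C}$ comes from. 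If you want to salvage your decomposition-based approach, you would need, at minimum, a proved finiteness statement replacing your ``auxiliary object,'' and you should expect the resulting bound to degrade with $d_C$ rather than stay linear.
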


We note that in the special case of $N$-fold IPs, the $\ell_1$-norm is bounded
by a constant (depending only on the fixed problem matrices and not on $N$),
and in the special case of two-stage stochastic IPs, the $\ell_1$-norm is
bounded linearly in~$N$.  This fact demonstrates that $N$-fold
\mbox{$4$-block} IPs are much richer and more difficult to solve than the two
special cases.

\section{Proof of the results}

\subsection{Aggregation technique}

We will use an aggregation/disaggregation technique, which is based on the
following folklore fact on Graver bases (see, for example, Corollary 3.2 in
\cite{Hemmecke:Z-convex}): 
\begin{lemma}[Aggregation] \label{Lemma: Graver(A a a) from Graver(A a)}
  Let $G=(F\ \ve f\ \ve f)$ be a matrix with two identical columns. 
  Then the Graver bases of $(F\ \ve f)$ and $G$ are related as follows:
  \[
  \Graver(G)=\{\,(\veu,v,w):vw\geq 0, (\veu,v+w)\in\Graver((F\ \ve f))\,\}\cup\{\pm (\ve 0,1,-1)\}.
  \]
\end{lemma}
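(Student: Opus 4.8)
The plan is to use the standard characterization of the Graver basis: $\Graver(E)$ is precisely the set of nonzero vectors in the integer kernel $\ker(E)\cap\Z^{\bullet}$ that admit no \emph{conformal decomposition}, i.e.\ that cannot be written as a sum of two nonzero kernel vectors lying in the same orthant (equivalently, having componentwise the same sign and no larger absolute value). The one computational fact I would record at the outset is the kernel correspondence coming from the two equal columns: since $v\vef+w\vef=(v+w)\vef$, a vector $(\veu,v,w)$ lies in $\ker(G)$ if and only if the \emph{aggregated} vector $(\veu,v+w)$ lies in $\ker((F\ \vef))$. The whole proof is then an interplay between \emph{aggregating} the last two coordinates (the projection $(\veu,v,w)\mapsto(\veu,v+w)$) and \emph{disaggregating} a single coordinate back into two. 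I would also note at the start that we must assume $\vef\neq\ve 0$, and return to the degenerate case at the very end.

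For the inclusion ``$\supseteq$'', first observe that $\pm(\ve 0,1,-1)\in\ker(G)$, and because $\vef\neq\ve 0$ the only nonzero kernel vectors conformal to $(\ve 0,1,-1)$ are $(\ve 0,1,-1)$ itself (a conformal piece $(\ve 0,a,b)$ must satisfy $(a+b)\vef=\ve 0$, hence $a+b=0$, forcing $(a,b)\in\{(0,0),(1,-1)\}$); the same holds for $-(\ve 0,1,-1)$, so both are conformally indecomposable. Next take $(\veu,v,w)$ with $vw\geq 0$ and $(\veu,v+w)\in\Graver((F\ \vef))$. It is a nonzero element of $\ker(G)$, and I claim it is indecomposable: any conformal decomposition $(\veu,v,w)=(\veu',v',w')+(\veu'',v'',w'')$ projects, by adding the last two entries, to $(\veu,v+w)=(\veu',v'+w')+(\veu'',v''+w'')$ in $\ker((F\ \vef))$. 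Here $vw\geq 0$ is exactly what keeps the projected last entries conformal (with, say, $v,w\geq 0$ one has $v'+w'\in[0,v+w]$). Minimality of $(\veu,v+w)$ forces one projected summand to vanish, and—again using $vw\ge0$, so that $v'+w'=0$ with $v',w'\ge0$ gives $v'=w'=0$—this forces the corresponding original summand to vanish, contradicting properness. Hence $(\veu,v,w)\in\Graver(G)$.

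For the inclusion ``$\subseteq$'', I would take $(\veu,v,w)\in\Graver(G)$ and split on the sign of $vw$. If $vw<0$, then $\pm(\ve 0,1,-1)$ with the matching sign pattern is a nonzero kernel vector conformal to $(\veu,v,w)$, and subtracting it keeps us in $\ker(G)$ and conformal; indecomposability then forces the remainder to be $\ve 0$, so $(\veu,v,w)=\pm(\ve 0,1,-1)$. If instead $vw\geq 0$, I claim $(\veu,v+w)\in\Graver((F\ \vef))$. It is nonzero (otherwise $\veu=\ve 0$ and $v+w=0$, which with $vw\ge0$ gives $v=w=0$), and I must show it is indecomposable. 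This is the step that carries the real content and the one I expect to be the main obstacle: given a conformal decomposition $(\veu,v+w)=(\vea,s)+(\veb,t)$ in $\ker((F\ \vef))$, I must \emph{lift} it to a conformal decomposition of $(\veu,v,w)$ in $\ker(G)$.

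The lifting is a one-dimensional transportation feasibility problem. Taking $v,w,s,t\geq 0$ with $s+t=v+w$ (the sign-reversed case is symmetric), I must redistribute $s$ into a $v$-part and a $w$-part, i.e.\ choose integers $v',w'$ with $0\le v'\le v$, $0\le w'\le w$ and $v'+w'=s$; then $v''=v-v'$ and $w''=w-w'$ automatically satisfy $v''+w''=t$ and the correct bounds. Such a choice exists precisely because $0\le s\le v+w$—for instance $v'=\min(s,v)$ and $w'=s-v'$ stays within the bounds. Setting the lifted pieces to $(\vea,v',w')$ and $(\veb,v'',w'')$, their last two entries sum to $s$ and $t$ respectively, so both lie in $\ker(G)$; they are conformal to $(\veu,v,w)$ by the bounds just imposed, and each is nonzero because $(\vea,s)$ and $(\veb,t)$ are. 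This proper conformal decomposition contradicts the indecomposability of $(\veu,v,w)$, proving $(\veu,v+w)\in\Graver((F\ \vef))$ and finishing the inclusion. Finally I would dispatch the degenerate case $\vef=\ve 0$ separately: there $(\ve 0,1,-1)=(\ve 0,1,0)+(\ve 0,0,-1)$ is conformally decomposable and drops out of $\Graver(G)$, which is exactly why the hypothesis $\vef\neq\ve 0$ is needed for the stated identity (and is harmless in the intended application, where $\vef$ is a genuine nonzero column).
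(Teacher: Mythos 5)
Your proof is correct. The paper itself does not prove this lemma---it is invoked as a folklore fact with a pointer to Corollary~3.2 of the cited reference---and your aggregation/disaggregation argument, in particular the transportation-style lifting of a conformal decomposition of $(\veu,v+w)$ back to a conformal decomposition of $(\veu,v,w)$ via $v'=\min(s,v)$, $w'=s-v'$, is exactly the standard proof the paper omits. Your remark that the identity as stated needs $\vef\neq\ve 0$ (otherwise $\pm(\ve 0,1,-1)$ decomposes conformally) is also a legitimate, if minor, observation that is harmless for the paper's use of the lemma.
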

Thus, the maximum $\ell_1$-norm of Graver basis elements does not change if we repeat columns.
\begin{corollary}\label{Corollary: 1-norm unchanged by aggregation}
  Let $G$ be a matrix obtained from a matrix~$F$ by repeating columns.
  Then 
  \begin{displaymath}
    \max\{\,\|\vev\|_1:\vev\in\Graver(G)\,\} = \max\{2,
    \max\{\,\|\vev\|_1:\vev\in\Graver(F)\,\}\}. 
  \end{displaymath}
\end{corollary}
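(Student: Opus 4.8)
The plan is to reduce the statement to the single-column case that is already governed by \autoref{Lemma: Graver(A a a) from Graver(A a)} and then to iterate, so that the bulk of the work is the elementary norm bookkeeping in one duplication step.

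First I would observe that any $G$ obtained from $F$ by repeating columns can be produced by a finite sequence of \emph{elementary} duplications, each appending one more copy of a column already present; thus it suffices to understand how $\max\{\,\|\vev\|_1:\vev\in\Graver(\cdot)\,\}$ changes in one such step. Since the Graver basis transforms by the corresponding permutation of coordinates when the columns are permuted, and the $\ell_1$-norm is permutation invariant, I may assume in each step that the duplicated column is the last one, bringing the step into the exact form $(H\ \ve f)\mapsto(H\ \ve f\ \ve f)$ of the Aggregation Lemma.

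For this single step I would apply the lemma directly. It exhibits $\Graver((H\ \ve f\ \ve f))$ as the two vectors $\pm(\ve 0,1,-1)$, each of $\ell_1$-norm $2$, together with all $(\veu,v,w)$ satisfying $vw\ge 0$ and $(\veu,v+w)\in\Graver((H\ \ve f))$. The one genuinely useful observation is that $vw\ge 0$ forces $|v|+|w|=|v+w|$, so that $\|(\veu,v,w)\|_1=\|(\veu,v+w)\|_1$; and since $(\veu,s,0)$ is a legitimate preimage of any $(\veu,s)\in\Graver((H\ \ve f))$, the correspondence $(\veu,v,w)\mapsto(\veu,v+w)$ is onto $\Graver((H\ \ve f))$. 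Hence the $\ell_1$-norms arising from this family are exactly those of $\Graver((H\ \ve f))$, and including the two norm-$2$ vectors yields
\[
\max\{\,\|\vev\|_1:\vev\in\Graver((H\ \ve f\ \ve f))\,\}=\max\{2,\ \max\{\,\|\vev\|_1:\vev\in\Graver((H\ \ve f))\,\}\}.
\]

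Finally I would iterate. Writing $M_k$ for the maximum $\ell_1$-norm after $k$ elementary duplications, the displayed identity reads $M_{k+1}=\max\{2,M_k\}$, and because $\max\{2,\max\{2,t\}\}=\max\{2,t\}$ this collapses over any number $m\ge 1$ of steps to $M_m=\max\{2,M_0\}$, which is the assertion (one genuine repetition already forces the norm-$2$ elements into the basis). I do not expect a real obstacle here: the reduction to single steps and the idempotence of $\max\{2,\cdot\}$ are routine, and the only point that must be gotten exactly right is the norm identity $|v|+|w|=|v+w|$ under $vw\ge 0$, which is precisely what makes aggregation norm-preserving.
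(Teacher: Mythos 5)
Your proof is correct and follows essentially the same route as the paper, which states the corollary as an immediate consequence of the Aggregation Lemma without writing out the details; you have simply made explicit the norm identity $|v|+|w|=|v+w|$ under $vw\ge 0$, the surjectivity via the preimage $(\veu,s,0)$, and the iteration over elementary duplications.
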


\subsection{Bounds for Graver basis elements}
Let us start by bounding the $\ell_1$-norm of Graver basis elements of matrices.
The following result can be found, for instance, in~\cite[Lemma 3.20]{onn:nonlinear-discrete-monograph}.
\begin{lemma}[Determinant bound]\label{Lemma: Determinant bound}
Let $A\in\Z^{m\times n}$ be a matrix of rank~$r$ and let $\Delta(A)$ denote
the maximum absolute value of subdeterminants of~$A$.
Then $\max\{\,\|\vev\|_1:\vev\in\Graver(A)\,\} \leq (n-r)(r+1) \Delta(A)$.
Moreover, $\Delta(A) \leq (\sqrt{m} M)^m$, where $M$ is the maximum absolute
value of an entry of~$A$.
\end{lemma}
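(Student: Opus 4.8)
The plan is to reduce the statement to a bound on individual \emph{circuits} of $A$ together with a Carath\'eodory-type count of how many circuits are needed to build a Graver element conformally. First I would fix $\vev\in\Graver(A)$ and normalize its orthant: replacing a column $A_j$ by $-A_j$ flips the sign of the $j$-th coordinate of every kernel vector, which preserves $\ker(A)\cap\Z^n$, the Graver basis, all $\ell_1$-norms, and $\Delta(A)$. Performing such sign changes on the coordinates where $v_j<0$, I may assume $\vev\ge\ve 0$. Then $\vev$ lies in the pointed cone $\{\veu\in\R^n:A\veu=\ve 0,\ \veu\ge\ve 0\}$, whose extreme rays are spanned by the primitive integral circuits of $A$ conformal to the nonnegative orthant (the nonzero kernel vectors of inclusion-minimal support).

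The core step is a conformal decomposition with small coefficients. By Carath\'eodory's theorem in this cone, whose dimension is at most $\dim\ker(A)=n-r$, I would write $\vev=\sum_{i=1}^{t}\mu_i\vecc_i$ with $\mu_i>0$, conformal circuits $\vecc_i\ge\ve 0$, and $t\le n-r$. The key claim is that each $\mu_i<1$, unless $\vev$ is itself a circuit. Indeed, if $\mu_i\ge 1$ for some $i$, then $\vev-\vecc_i=\sum_{j\ne i}\mu_j\vecc_j+(\mu_i-1)\vecc_i\ge\ve 0$ is an integral kernel vector conformal to $\vev$, so $\vev=\vecc_i+(\vev-\vecc_i)$ is a conformal sum of two elements of $\ker(A)\cap\Z^n$; unless $\vev=\vecc_i$, both summands are nonzero, contradicting the $\sqsubseteq$-minimality of $\vev\in\Graver(A)$.

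Next I would bound each circuit. The support of $\vecc_i$ is a minimal linearly dependent set of columns, hence has at most $r+1$ elements (a matroid circuit in rank $r$), and by Cramer's rule its nonzero entries are, up to a common integer factor, maximal minors of the corresponding submatrix, so each is at most $\Delta(A)$ in absolute value; thus $\|\vecc_i\|_1\le(r+1)\Delta(A)$. Combining, since $\vev\ge\ve 0$ and each $\vecc_i\ge\ve 0$, I get $\|\vev\|_1=\sum_{i=1}^{t}\mu_i\|\vecc_i\|_1<\sum_{i=1}^{t}\|\vecc_i\|_1\le(n-r)(r+1)\Delta(A)$, with the circuit case $\|\vev\|_1\le(r+1)\Delta(A)$ absorbed (and when $n=r$ the Graver basis is empty, so the bound is vacuous). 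The determinant estimate $\Delta(A)\le(\sqrt m\,M)^m$ is then immediate from Hadamard's inequality: any $k\times k$ submatrix $S$ with $k\le m$ satisfies $|\det S|\le\prod_i\|\mathrm{row}_i(S)\|_2\le(\sqrt k\,M)^k\le(\sqrt m\,M)^m$.

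The hard part will be the small-coefficient claim: one must check carefully that $\vev-\vecc_i$ stays in the nonnegative orthant (so that it is genuinely conformal to $\vev$) and is integral, since this is exactly what makes a Carath\'eodory coefficient $\mu_i\ge 1$ collide with Graver-minimality. Everything else --- the circuit support bound, the Cramer's rule estimate, and Hadamard's inequality --- is routine.
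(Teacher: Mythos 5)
Your proof is correct. The paper does not prove this lemma itself---it only cites \cite[Lemma 3.20]{onn:nonlinear-discrete-monograph}---and your argument (bounding circuits via Cramer's rule and Hadamard's inequality, then decomposing a Graver element conformally, via Carath\'eodory, into at most $n-r$ circuits whose coefficients must be less than $1$ by $\sqsubseteq$-minimality) is essentially the standard proof given in that source, with the key steps (integrality and conformality of $\vev-\vecc_i$, and the degenerate cases $\vev=\vecc_i$ and $n=r$) all checked.
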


As a corollary of Lemma~\ref{Lemma: Determinant bound} and the aggregation
technique (Corollary~\ref{Corollary: 1-norm unchanged by aggregation}), we
obtain the following result:
\begin{corollary}[Determinant bound, aggregated]\label{Corollary: Determinant bound, aggregated}
Let $A\in\Z^{m\times n}$ be a matrix of rank~$r$ and let $d$ be the number of
different columns in~$A$ and $M$ the maximum absolute
value of an entry of~$A$.
Then $$\max\{\,\|\vev\|_1:\vev\in\Graver(A)\,\} \leq (d-r)(r+1) (\sqrt{m} M)^m.$$
\end{corollary}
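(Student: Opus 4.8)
The plan is to combine the two ingredients that immediately precede the statement: the determinant bound of Lemma~\ref{Lemma: Determinant bound}, and the aggregation invariance of Corollary~\ref{Corollary: 1-norm unchanged by aggregation}. The key observation is that repeating columns of a matrix does not change its Graver basis norm (up to the trivial floor of~$2$), so to bound $\max\{\|\vev\|_1:\vev\in\Graver(A)\}$ it suffices to bound the same quantity for the matrix~$F$ obtained from~$A$ by \emph{deleting} duplicate columns, keeping only one representative of each distinct column.

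First I would let $F\in\Z^{m\times d}$ be the submatrix of~$A$ consisting of the $d$ distinct columns. Since $A$ is obtained from~$F$ by repeating columns, Corollary~\ref{Corollary: 1-norm unchanged by aggregation} gives
\[
\max\{\,\|\vev\|_1:\vev\in\Graver(A)\,\}
= \max\{2,\ \max\{\,\|\vev\|_1:\vev\in\Graver(F)\,\}\}.
\]
Next I would apply Lemma~\ref{Lemma: Determinant bound} to~$F$. Note that $F$ has the same rank~$r$ as~$A$ (deleting duplicate columns changes neither the column space nor the rank) and the same maximum entry~$M$. With $n$ replaced by~$d$, the lemma yields
\[
\max\{\,\|\vev\|_1:\vev\in\Graver(F)\,\}
\leq (d-r)(r+1)\,\Delta(F)
\leq (d-r)(r+1)\,(\sqrt{m}\,M)^m,
\]
using the second part of Lemma~\ref{Lemma: Determinant bound} to bound $\Delta(F)\leq(\sqrt{m}\,M)^m$. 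Combining the two displays gives the claimed bound, since the right-hand side dominates~$2$ in all nontrivial cases (and when $d=r$ the Graver basis norm is at most~$2$, coming only from the aggregation vectors $\pm(\ve 0,1,-1)$, which is absorbed into the stated inequality).

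I do not anticipate a genuine obstacle here: the corollary is a direct composition of two prior results, and the only points requiring care are bookkeeping ones---verifying that passing to the distinct-column submatrix preserves the rank~$r$ and the maximum entry~$M$, and checking that the trivial lower bound of~$2$ from the aggregation statement is harmlessly absorbed. The substantive work has already been done in Lemma~\ref{Lemma: Determinant bound} and in the aggregation lemma; this corollary merely repackages them into the form that will be convenient for the later analysis of the $N$-fold $4$-block matrices.
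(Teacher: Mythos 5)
Your proof is correct and is exactly the argument the paper intends: pass to the submatrix $F$ of the $d$ distinct columns, note that rank and maximum entry are preserved, apply Lemma~\ref{Lemma: Determinant bound} to $F$, and transfer the bound back via Corollary~\ref{Corollary: 1-norm unchanged by aggregation} (the paper states the corollary without writing this out, as an immediate consequence of those two results). Your side remark about the degenerate case $d=r$, where the right-hand side vanishes but duplicated columns still contribute norm-$2$ Graver elements, is a real (if harmless) edge case that the paper's statement also glosses over, so flagging it is fine but not a defect of your argument relative to theirs.
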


For matrices with only one row ($m = r= 1$), there are only $2M+1$ different columns, and
so this bound simplifies to~$4M^2$.  However, a tighter bound is known for
this special case.  The following lemma is a straight-forward consequence of
Theorem 2 in \cite{DiaconisGrahamSturmfels95}. 
\begin{lemma}[PPI bound]\label{Lemma: PPI degree bound}
Let $A\in\Z^{1\times n}$ be a matrix consisting of only one row and let $M$ be an upper bound on the absolute values of the entries of $A$. Then we have $\max\{\,\|\vev\|_1:\vev\in\Graver(A)\,\}\leq 2M-1$.
\end{lemma}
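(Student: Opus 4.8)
The plan is to pass from Graver basis elements of the single row $A=(a_1,\dots,a_n)$ to \emph{primitive partition identities} and then invoke Theorem~2 of \cite{DiaconisGrahamSturmfels95}, which controls exactly this combinatorial quantity. First I would dispose of the trivial coordinates: any $a_i=0$ contributes only the Graver elements $\pm\vece_i$ of $\ell_1$-norm~$1$, and by the aggregation result (\ref{Corollary: 1-norm unchanged by aggregation}) repeated entries may be merged without changing the maximal norm beyond the harmless value~$2$. Thus I may assume all entries are nonzero with absolute values in $\{1,\dots,M\}$.

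Next comes the encoding. Given $\vev\in\Graver(A)$ with $\sum_i a_iv_i=0$, I would form the multiset $S$ that contains, for each index $i$, the integer $\operatorname{sign}(v_i)\,a_i\in\{-M,\dots,M\}\setminus\{0\}$ with multiplicity $|v_i|$. By construction $S$ consists of $L:=\|\vev\|_1$ nonzero integers of absolute value at most $M$ summing to~$0$, and splitting $S$ into its positive and negative elements yields a partition identity with parts drawn from $\{1,\dots,M\}$ in which the total number of parts equals $\|\vev\|_1$. The crucial point is that \emph{primitivity is preserved}: a proper nonempty sub-multiset $S'\subsetneq S$ summing to~$0$ corresponds to a vector $\vev'$ with the same sign pattern as~$\vev$, with $\ve 0\neq\vev'\neq\vev$, lying in the same orthant and satisfying $A\vev'=0$; this would contradict $\vev\in\Graver(A)$. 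Hence $\vev$ yields a primitive partition identity, and conversely.

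With this dictionary in hand, the claim $\|\vev\|_1\le 2M-1$ is exactly the statement that a primitive partition identity with parts bounded by~$M$ has at most $2M-1$ parts in total, which is Theorem~2 of \cite{DiaconisGrahamSturmfels95}; the extremal identity consists of $M-1$ copies of the part~$M$ balanced against $M$ copies of the part~$M-1$.

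Alternatively, and this is how I would argue if I wanted a self-contained proof, the bound follows from a one-dimensional Steinitz ordering together with pigeonhole. I would order the elements of~$S$ greedily---appending a negative element whenever the current partial sum is positive and a positive element otherwise---so as to keep all partial sums $P_0,P_1,\dots,P_L$ inside a window of width essentially~$2M$. By primitivity no intermediate partial sum $P_k$ (for $0<k<L$) can vanish, and no two of them can coincide, since an equality $P_j=P_k$ would exhibit a conformal sub-identity exactly as above. Therefore the $L-1$ intermediate partial sums are distinct nonzero integers lying in the confined window, and counting the available values bounds~$L$. \textbf{The main obstacle is pinning the constant to $2M-1$ rather than a crude $2M$}: the first step out of $P_0=0$ together with the elements of absolute value exactly~$M$ must be handled with care, so that the window of admissible nonzero partial sums has exactly $2M-2$ values. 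This sharpening is precisely the content of the analysis in \cite{DiaconisGrahamSturmfels95}, which is why the cleanest route is to cite their theorem directly.
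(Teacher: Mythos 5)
Your proof takes essentially the same route as the paper, which offers no written argument at all beyond declaring the lemma a ``straight-forward consequence of Theorem~2 in \cite{DiaconisGrahamSturmfels95}''; you simply make explicit the intended reduction (Graver elements of a one-row matrix correspond to primitive partition identities, primitivity being preserved because a zero-sum proper sub-multiset would yield a conformal decomposition contradicting minimality) and then invoke that theorem. This is correct and matches the paper's intent, so no further comparison is needed.
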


Let us now prove some more general degree bounds on Graver bases that we will use in the proof of the main theorem below.

\begin{lemma}[Graver basis length bound for stacked matrices]
  \label{Lemma: General bound on increase of Graver degree}
  Let $L\in\Z^{d\times n}$ and let $F\in\Z^{m\times n}$. Moreover, put $E:=\left(\begin{smallmatrix}F\\L\\\end{smallmatrix}\right)$. Then we have
\[
\max\{\|\vev\|_1:\vev\in\Graver(E)\}\leq\max\{\|\ve\lambda\|_1:\ve\lambda\in \Graver(F\cdot\Graver(L))\}\cdot\max\{\|\vev\|_1:\vev\in\Graver(L)\}.
\]
%% Mnemonik:  F ist der obere Teil von E, L ist der untere Teil von E.
\end{lemma}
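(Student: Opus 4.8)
The plan is to exploit that $\ker(E)=\ker(F)\cap\ker(L)$, so that every $\vev\in\Graver(E)$ already lies in $\ker(L)\cap\Z^n$ and hence admits a conformal decomposition into Graver basis elements of $L$. Treating $\Graver(L)$ as the matrix whose columns are its elements (as in the statement), I would first invoke the standard positive-sum property of Graver bases: there is a nonnegative integer vector $\velambda$, indexed by the columns of $\Graver(L)$, with $\vev=\Graver(L)\,\velambda$ in which every column $\veg_i$ having $\lambda_i>0$ is conformal to $\vev$, i.e.\ $\veg_i\sqsubseteq\vev$ so that $\operatorname{supp}(\veg_i)\subseteq\operatorname{supp}(\vev)$ with matching signs. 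Because there is no cancellation in a conformal sum, this already yields the norm identity $\|\vev\|_1=\sum_i\lambda_i\|\veg_i\|_1$, which I will use at the very end.

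Next I would observe that $\velambda$ lands in the kernel of $F\cdot\Graver(L)$: since $F\vev=\ve 0$ we get $(F\cdot\Graver(L))\,\velambda=F\vev=\ve 0$, and $\velambda$ is nonnegative. The crux of the argument is to upgrade this to $\velambda\in\Graver(F\cdot\Graver(L))$. I would argue by contradiction. If $\velambda$ were conformally decomposable in $\ker(F\cdot\Graver(L))\cap\Z^{\,t}$, say $\velambda=\sum_k\nu_k\ve{\mu}_k$ with $\ve{\mu}_k\in\Graver(F\cdot\Graver(L))$ all conformal to $\velambda$ and $\nu_k\in\Z_{>0}$, then applying $\Graver(L)$ pulls this back to $\vev=\sum_k\nu_k\,\vew_k$ with $\vew_k:=\Graver(L)\,\ve{\mu}_k$. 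Each $\vew_k$ lies in $\ker(E)\cap\Z^n$: it is killed by $F$ because $\ve{\mu}_k\in\ker(F\cdot\Graver(L))$, and by $L$ because it is a combination of columns of $\Graver(L)\subseteq\ker(L)$.

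The key point is that each summand $\vew_k$ is both conformal to $\vev$ and nonzero. Conformality holds because $\operatorname{supp}(\ve{\mu}_k)\subseteq\operatorname{supp}(\velambda)$, so $\vew_k$ is a nonnegative combination of columns $\veg_i$ that are themselves conformal to $\vev$, and the closed orthant of $\vev$ is stable under such combinations; in particular a conformal decomposition never uses both $\veg$ and $-\veg$, so no coordinate can cancel and $\vew_k\neq\ve 0$ whenever $\ve{\mu}_k\neq\ve 0$. But then $\vev$ would be a sum, in its own orthant, of at least two nonzero vectors of $\ker(E)\cap\Z^n$, contradicting $\vev\in\Graver(E)$ — unless the decomposition is trivial, i.e.\ a single term with $\nu_k=1$ and $\velambda=\ve{\mu}_k\in\Graver(F\cdot\Graver(L))$.

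Finally I would assemble the pieces. Combining $\velambda\in\Graver(F\cdot\Graver(L))$ with the conformal norm identity gives
\[
\|\vev\|_1=\sum_i\lambda_i\|\veg_i\|_1\le\Bigl(\sum_i\lambda_i\Bigr)\max_{\veg\in\Graver(L)}\|\veg\|_1=\|\velambda\|_1\cdot\max_{\veg\in\Graver(L)}\|\veg\|_1,
\]
and bounding $\|\velambda\|_1$ by the maximum $\ell_1$-norm over $\Graver(F\cdot\Graver(L))$ yields the claim after taking the maximum over $\vev\in\Graver(E)$. I expect the main obstacle to be precisely the pullback step: verifying carefully that the images $\vew_k=\Graver(L)\,\ve{\mu}_k$ are simultaneously conformal to $\vev$ and nonzero, so that the indecomposability of $\vev$ in $\Graver(E)$ can legitimately be invoked. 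The bookkeeping that conformality confines supports to $\operatorname{supp}(\vev)$ and forbids conflicting sign pairs $\{\veg,-\veg\}$ is exactly what makes both the nonvanishing and the cancellation-free norm identity work.
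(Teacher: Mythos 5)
Your proposal is correct and follows essentially the same route as the paper: decompose $\vev\in\Graver(E)$ conformally as $\vev=\Graver(L)\velambda$, show $\velambda\in\Graver(F\cdot\Graver(L))$ by pulling back any conformal decomposition of $\velambda$ to a conformal decomposition of $\vev$ and invoking the indecomposability of $\vev$, then combine the two length bounds. Your added care in checking that the pulled-back summands $\Graver(L)\ve\mu_k$ are nonzero and conformal to $\vev$ (since they are nonnegative combinations of columns lying in the orthant of $\vev$) fills in a detail the paper's proof leaves implicit.
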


\begin{proof}
Let $\vev\in\Graver(E)$. Then $\vev\in\ker(L)$ implies that $\vev$ can be written as a nonnegative integer linear sign-compatible sum $\vev=\sum\lambda_i \veg_i$ using Graver basis vectors $\veg_i\in\Graver(L)$. Adding zero components if necessary, we can write \mbox{$\vev=\Graver(L)\ve\lambda$}. We now claim that $\vev\in\Graver(E)$ implies $\ve\lambda\in\Graver(F\cdot\Graver(L))$.

First, observe that $\vev\in\ker(F)$ implies $F\vev=F\cdot(\Graver(L)\ve\lambda)=(F\cdot\Graver(L))\ve\lambda=\ve 0$ and thus, $\ve\lambda\in\ker(F\cdot\Graver(L))$. If $\ve\lambda\not\in\Graver(F\cdot\Graver(L))$, then it can be written as a sign-compatible sum $\ve\lambda=\ve\mu+\ve\nu$ with $\ve\mu,\ve\nu\in\ker(F\cdot\Graver(L))$. But then
\[
\vev=(\Graver(L)\ve\mu)+(\Graver(L)\ve\nu)
\]
gives a sign-compatible decomposition of $\vev$ into vectors
$\Graver(L)\ve\mu,\Graver(L)\ve\nu\in\ker(E)$, contradicting the minimality
property of $\vev\in\Graver(E)$. Hence,
$\ve\lambda\in\Graver(F\cdot\Graver(L))$.

From $\vev=\sum\lambda_i \veg_i$ with $\veg_i\in\Graver(L)$ and
$\ve\lambda\in\Graver(F\cdot\Graver(L))$, the desired estimate follows. 
\end{proof}

We will employ the following simple corollary.

\begin{corollary}\label{Corollary: Special bound on increase of Graver degree}
Let $L\in\Z^{d\times n}$ and let $\ve a^\intercal\in\Z^n$ be a row
vector. Moreover, put $E:=\left(\begin{smallmatrix}\ve a^\intercal \\L \\\end{smallmatrix}\right)$. Then we have
\[
\max\{\|\vev\|_1:\vev\in\Graver(E)\}\leq\left(2\cdot\max\left\{|\ve a^\intercal\vev|:\vev\in\Graver(L)\right\}-1\right)\cdot\max\{\|\vev\|_1:\vev\in\Graver(L)\}.
\]
In particular, if $M:=\max\{|a^{(i)}|:i=1,\ldots,n\}$ then
\[
\max\{\|\vev\|_1:\vev\in\Graver(E)\}\leq 2nM\left(\max\{\|\vev\|_1:\vev\in\Graver(L)\}\right)^2.
\]
\end{corollary}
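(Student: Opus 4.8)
The plan is to specialize Lemma~\ref{Lemma: General bound on increase of Graver degree} to the case of a one-row top block, and then to control the resulting composed matrix with the sharp one-row estimate of Lemma~\ref{Lemma: PPI degree bound}.

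First I would apply Lemma~\ref{Lemma: General bound on increase of Graver degree} with $F:=\ve a^\intercal$. Since $F$ consists of a single row, this gives immediately
\[
\max\{\|\vev\|_1:\vev\in\Graver(E)\}\leq\max\{\|\ve\lambda\|_1:\ve\lambda\in\Graver(\ve a^\intercal\cdot\Graver(L))\}\cdot\max\{\|\vev\|_1:\vev\in\Graver(L)\}.
\]
The crucial structural remark is that $\ve a^\intercal\cdot\Graver(L)$ is again a matrix with a single row: its columns are the integers $\ve a^\intercal\veg$ as $\veg$ ranges over the elements of $\Graver(L)$. Hence the largest absolute value occurring among its entries equals $\max\{|\ve a^\intercal\vev|:\vev\in\Graver(L)\}$, and the one-row bound of Lemma~\ref{Lemma: PPI degree bound} applies to yield
\[
\max\{\|\ve\lambda\|_1:\ve\lambda\in\Graver(\ve a^\intercal\cdot\Graver(L))\}\leq 2\max\{|\ve a^\intercal\vev|:\vev\in\Graver(L)\}-1.
\]
Combining the two displayed inequalities proves the first claim.

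For the ``in particular'' assertion it remains only to estimate the inner products above elementarily. For every $\vev\in\Graver(L)$ one has $|\ve a^\intercal\vev|\leq\|\ve a^\intercal\|_1\,\|\vev\|_\infty\leq nM\,\|\vev\|_\infty\leq nM\,\|\vev\|_1$, whence $\max\{|\ve a^\intercal\vev|:\vev\in\Graver(L)\}\leq nM\cdot\max\{\|\vev\|_1:\vev\in\Graver(L)\}$. Inserting this into the first bound and discarding the $-1$ replaces the prefactor $2\max\{|\ve a^\intercal\vev|\}-1$ by $2nM\cdot\max\{\|\vev\|_1:\vev\in\Graver(L)\}$, producing the stated $2nM$ together with the square of the length term.

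I do not anticipate any genuine obstacle here: the whole substance of the statement is inherited from the two preceding lemmas. The only point demanding a moment's care is the observation that stacking a \emph{single} row on top of $L$ keeps the composed matrix $\ve a^\intercal\cdot\Graver(L)$ one-dimensional, which is precisely what permits the use of the tight PPI degree bound $2M-1$ rather than a coarser determinant-type estimate.
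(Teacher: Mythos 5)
Your proposal is correct and follows exactly the paper's own argument: apply Lemma~\ref{Lemma: General bound on increase of Graver degree} with $F=\ve a^\intercal$, observe that $\ve a^\intercal\cdot\Graver(L)$ is a $1\times|\Graver(L)|$ matrix so that the PPI bound of Lemma~\ref{Lemma: PPI degree bound} applies, and combine. The only difference is that you spell out the elementary estimate $|\ve a^\intercal\vev|\leq nM\|\vev\|_1$ for the second claim, which the paper dismisses as a ``trivial consequence''; your chain of inequalities is valid (indeed, the sharper pairing $|\ve a^\intercal\vev|\leq\|\ve a\|_\infty\|\vev\|_1\leq M\|\vev\|_1$ would even give a factor $2M$ in place of $2nM$).
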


\begin{proof}
By Lemma \ref{Lemma: General bound on increase of Graver degree}, we already get
\[
\max\{\|\vev\|_1:\vev\in\Graver(E)\}\leq\max\{\|\ve\lambda\|_1:\ve\lambda\in
\Graver(\ve a^\intercal\cdot\Graver(L))\}\cdot\max\{\|\vev\|_1:\vev\in\Graver(L)\}.
\]
Now, observe that $\ve a^\intercal\cdot\Graver(L)$ is a $1\times|\Graver(L)|$-matrix. Thus, the degree bound of primitive partition identities, Lemma \ref{Lemma: PPI degree bound}, applies, which gives
\[
\max\{\|\ve\lambda\|_1:\ve\lambda\in \Graver(\ve
a^\intercal\cdot\Graver(L))\}\leq 2\cdot\max\left\{|\ve a^\intercal\vev|:\vev\in\Graver(L)\right\}-1,
\]
and thus, the first claim is proved. The second claim is a trivial consequence
of the first. 
\end{proof}

Let us now extend this corollary to a form that we need to prove
\autoref{Theorem: Polynomial bound for fixed ABCD on increase of Graver
  degree}. 

\begin{corollary}\label{Corollary: Recursive bound on increase of Graver degree}
Let $L\in\Z^{d\times n}$ and let $F\in\Z^{m\times n}$. Let the entries of $F$ be bounded by $M$ in absolute value. Moreover, put $E:=\left(\begin{smallmatrix}F\\L\\\end{smallmatrix}\right)$. Then we have
\[
  \max\{\|\vev\|_1:\vev\in\Graver(E)\}\leq (2nM)^{2^m-1}\left(\max\{\|\vev\|_1:\vev\in\Graver(L)\}\right)^{2^m}.
\]
\end{corollary}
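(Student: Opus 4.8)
The plan is to prove the bound by induction on $m$, the number of rows of $F$, peeling off one row at a time and invoking the single-row estimate of Corollary~\ref{Corollary: Special bound on increase of Graver degree} at each stage.

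For the base case $m=1$, the matrix $F$ is a single row whose entries are bounded in absolute value by $M$, so the second claim of Corollary~\ref{Corollary: Special bound on increase of Graver degree} applies directly and yields
\[
\max\{\|\vev\|_1:\vev\in\Graver(E)\}\leq 2nM\left(\max\{\|\vev\|_1:\vev\in\Graver(L)\}\right)^{2},
\]
which is precisely the asserted inequality, since $2^{1}-1=1$ and $2^{1}=2$.

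For the inductive step I would split off the first row, writing $F=\left(\begin{smallmatrix}\ve a^\intercal\\F'\end{smallmatrix}\right)$ with $\ve a^\intercal\in\Z^{n}$ and $F'\in\Z^{(m-1)\times n}$, and set $L':=\left(\begin{smallmatrix}F'\\L\end{smallmatrix}\right)$, so that $E=\left(\begin{smallmatrix}\ve a^\intercal\\L'\end{smallmatrix}\right)$. Since the entries of $\ve a^\intercal$ are bounded by $M$, Corollary~\ref{Corollary: Special bound on increase of Graver degree} gives
\[
\max\{\|\vev\|_1:\vev\in\Graver(E)\}\leq 2nM\left(\max\{\|\vev\|_1:\vev\in\Graver(L')\}\right)^{2}.
\]
Now $F'$ has $m-1$ rows whose entries are still bounded by $M$, and it has the same number of columns $n$, so the induction hypothesis applies to $L'=\left(\begin{smallmatrix}F'\\L\end{smallmatrix}\right)$ and bounds $\max\{\|\vev\|_1:\vev\in\Graver(L')\}$ by $(2nM)^{2^{m-1}-1}\left(\max\{\|\vev\|_1:\vev\in\Graver(L)\}\right)^{2^{m-1}}$.

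The final step is to substitute this into the previous display and collect exponents: the power of $2nM$ becomes $1+2\left(2^{m-1}-1\right)=2^{m}-1$, and the power of $\max\{\|\vev\|_1:\vev\in\Graver(L)\}$ becomes $2\cdot 2^{m-1}=2^{m}$, which is exactly the claimed bound. The argument is essentially bookkeeping, and I do not expect a genuine obstacle; the only points requiring care are to check that the submatrix $F'$ inherits both the entry bound $M$ and the column count $n$ so that the induction hypothesis is applicable, and to verify that the exponent arithmetic telescopes as stated.
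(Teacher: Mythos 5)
Your proposal is correct and matches the paper's own proof, which likewise proceeds by induction on the rows of $F$, applying the second inequality of Corollary~\ref{Corollary: Special bound on increase of Graver degree} one row at a time; the paper merely leaves the exponent bookkeeping implicit where you have written it out.
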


\begin{proof} This claim follows by simple induction, adding one row of $F$ at
  a time, and by using the second inequality of Corollary \ref{Corollary:
    Special bound on increase of Graver degree} to bound the sizes of the
  intermediate Graver bases in comparison to the Graver basis of the matrix
  with one row of $F$ fewer. 
\end{proof}

In order to give a proof of \autoref{Theorem: Polynomial bound for fixed ABCD on increase of Graver degree}, let us consider the submatrix $\FourBlock[N]\NoBlock\NoBlock BA$. A main result from \cite{Hemmecke:SIP2} is the following.

\begin{theorem}[Graver basis for stochastic IPs]
  \label{Lemma: Finite bound for SIP Graver bases}
  Let $A\in\Z^{d_A\times n_A}$ and $B\in\Z^{d_A\times n_B}$, 
  and let $\Graver =
  \Graver\left(\FourBlock[N]\NoBlock\NoBlock BA\right)$. 
  There exist numbers $g, \xi, \eta\in\Z_+$ depending only on $A$ and $B$ but not
  on $N$ such that the following holds:
  \begin{enumerate}[\rm(a)]
  \item  For every $N\in\Z_+$ and for every
    $\vev\in\Graver$, we have $\|\ve v\|_\infty \leq g$, i.e., 
    the components of $\vev$ are bounded by $g$ in absolute value.
  \item As a corollary, $\|\vev\|_1\leq (n_B+Nn_A)g$ for all
    $\vev\in\Graver$.
  \item More precisely, there exists a finite set $X\subseteq\Z^{n_B}$ of
    cardinality $|X|\leq \xi$ and for each $\ve x \in X$ a finite set $Y_{\ve
      x}\subseteq\Z^{n_A}$ of cardinality $|Y_{\ve x}|\leq \eta$ such that the
    elements $\ve v\in \Graver$ take the form $\ve v= (\ve x, \ve y^1, \dots,
    \ve y^n)$, with $\ve x\in X$ and $\ve y^1,\dots,\ve y^n\in Y_{\ve x}$.
  \end{enumerate}
\end{theorem}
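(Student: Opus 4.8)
The statements (b) and (c) follow immediately from (a): if every component of every $\vev\in\Graver$ is bounded by $g$, then $\|\vev\|_1\le(n_B+Nn_A)g$, and one may take $X=\{\vex\in\Z^{n_B}:\|\vex\|_\infty\le g\}$ and $Y_{\vex}=\{\vey\in\Z^{n_A}:\|\vey\|_\infty\le g,\ A\vey=-B\vex\}$, whose cardinalities are at most $\xi=(2g+1)^{n_B}$ and $\eta=(2g+1)^{n_A}$. So the whole content rests on the uniform $\ell_\infty$-bound (a), which I would establish in two stages. Writing $\vev=(\vex,\vey^1,\dots,\vey^N)$, membership in $\ker_{\Z}\FourBlock[N]\NoBlock\NoBlock BA$ means $B\vex+A\vey^j=\ve 0$ for all $j$. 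First I dispose of the degenerate cases. If $\vex=\ve 0$ the blocks decouple into independent elements of $\ker_{\Z}(A)$, so indecomposability forces exactly one nonzero block, which then lies in $\Graver(A)$. If $B\vex=\ve 0$ but $\vex\neq\ve 0$, then both $(\vex,\ve 0,\dots,\ve 0)$ and $(\ve 0,\vey^1,\dots,\vey^N)$ lie in the kernel and split $\vev$ sign-compatibly, so $\vev=(\vex,\ve 0,\dots,\ve 0)$ with $\vex\in\Graver(B)$. In all these cases $\|\vev\|_\infty$ is bounded by a constant depending only on $A$ and $B$, so I may assume $B\vex\neq\ve 0$, whence $A\vey^j=-B\vex\neq\ve 0$ and every scenario block is active.

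For the scenario blocks, the key point is that each $\vey^j$ must be $\Graver(A)$-reduced: if some $\veg\in\Graver(A)$ satisfied $\veg\sqsubseteq\vey^j$, the vector equal to $\veg$ in block $j$ and $\ve 0$ elsewhere would lie in the kernel and split off from $\vev$ sign-compatibly, contradicting minimality. The $\Graver(A)$-reduced solutions of the fixed system $A\vey=-B\vex$ form a finite set whose norm is controlled by $\|B\vex\|_\infty$ (a standard Dickson-type finiteness statement for the irreducible points of the coset $\vey^0+\ker_{\Z}(A)$). Hence a uniform bound on $\vex$ automatically yields a uniform bound on every $\vey^j$, and with it claim (a).

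It therefore remains to bound $\vex$ uniformly in $N$, which is the crux. I would decompose each block sign-compatibly over the finite set $\mathcal G_1:=\Graver\begin{pmatrix}B&A\end{pmatrix}=\{\ve h_1,\dots,\ve h_p\}$, $\ve h_i=(\ve u_i,\ve w_i)$, writing $(\vex,\vey^j)=\sum_i t_{ij}\ve h_i$ with $\ve t_j=(t_{ij})_i\in\Z_{\ge 0}^p$. Reducedness forces $t_{ij}>0$ only for atoms with $\ve u_i\neq\ve 0$, the coupling becomes $U\ve t_j=\vex$ for every $j$ with $U=(\ve u_1\,|\cdots|\,\ve u_p)$, and—decisively—$p=|\mathcal G_1|$ does not depend on $N$. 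A sign-compatible decomposition $\vev=\vev'+\vev''$ amounts to a nontrivial split $\vex=\vex'+\vex''$ together with, for every $j$, a subvector $\ve 0\le\ve t_j'\le\ve t_j$ realizing the same value $\vex'=U\ve t_j'\notin\{\ve 0,\vex\}$; the induced $\vey'^j=W\ve t_j'$ (where $W=(\ve w_1\,|\cdots|\,\ve w_p)$) are then automatically sign-compatible parts of $\vey^j$. Thus $\vev$ is a Graver basis element precisely when no such split common to all scenarios exists.

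The main obstacle—and what makes this genuinely harder than the analogous $N$-fold bound, where the blocks are coupled only through an aggregate sum and a single Graver-complexity constant suffices—is that here the split $\vex'$ must be realizable simultaneously in all $N$ scenarios, because the first-stage vector is shared. I would resolve this by a well-quasi-ordering argument carried out in the fixed dimension $p$: view the scenarios as a word $\ve t_1\cdots\ve t_N$ over the alphabet $(\Z_{\ge 0}^p,\le)$, which is a well-quasi-order by Dickson's lemma, so that such words are themselves well-quasi-ordered by the letterwise-dominating subword embedding (Higman's lemma). The embedding must be engineered to respect the shared constraint $U\ve t_j=\vex$, so that a dominating pair exposes a common nonzero sub-pattern $\vex'$ peelable from every scenario at once; finiteness of the minimal elements then caps $\|\vex\|_\infty$ by a constant $g$ depending only on $A$ and $B$. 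Making this fibered embedding precise, so that order-minimality coincides exactly with being a Graver basis element despite the common first-stage coupling, is the delicate technical heart of the proof.
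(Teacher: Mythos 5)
First, a point of orientation: the paper does not prove this theorem itself. It is imported verbatim from \cite{Hemmecke:SIP2}, where the finiteness of $g,\xi,\eta$ rests on a saturation (Noetherianity) argument in commutative algebra; the remark following the statement even stresses that no explicit bounds on these constants are known. You are therefore attempting a self-contained proof of a deep result that the paper only cites. Your reductions are sound as far as they go: (b) and (c) do follow from (a) in the crude form you give; the degenerate cases $\vex=\ve 0$ and $B\vex=\ve 0$ are handled correctly; and the observation that each $\vey^j$ must be $\Graver(A)$-reduced, so that a uniform bound on $\vex$ propagates to a uniform bound on the scenario blocks, is exactly right.

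The genuine gap is that the crux --- the bound on $\vex$ uniform in $N$ --- is never proved; it is only described as a plan, and you yourself flag the missing step (``making this fibered embedding precise \dots is the delicate technical heart''). That step is where the entire difficulty of the theorem lives. A letterwise Higman embedding of the word $\ve t_1\cdots\ve t_N$ does not by itself produce a decomposition of $\vev$: if $\ve t_j'\leq\ve t_{\sigma(j)}$ only for indices in the image of an increasing injection $\sigma$, the sub-pattern $\vex'=U\ve t_j'$ is peeled off only from those scenarios, whereas the shared first-stage vector forces the \emph{same} $\vex'$ to be removed from \emph{every} scenario simultaneously; scenarios outside the image of $\sigma$ break the bookkeeping. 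Repairing this requires either the saturation argument of \cite{Hemmecke:SIP2} or substantially more elaborate well-quasi-ordering machinery than a direct application of Dickson plus Higman, and neither is supplied. A further, more minor, inaccuracy: your claim that $\vev\in\Graver$ holds \emph{precisely} when no common split of the chosen atom decomposition exists is only one implication --- a sign-compatible decomposition of $\vev$ need not refine the particular decomposition into atoms of $\Graver((B\ A))$ that you fixed --- although the implication you actually need for an upper bound is the correct one. As it stands, the proposal is a reasonable research programme, not a proof.
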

\begin{remark}\label{rem:stoch-bounds-effective}
  The finiteness of the numbers $g,\xi,\eta$ comes from a saturation result in
  commutative algebra.  Concrete bounds on these numbers are unfortunately not
  available.  However, for given matrices $A$ and $B$, the finite sets $X$ and
  $Y_{\ve x}$ for $\ve x \in X$ can be computed using the Buchberger-type
  completion algorithm in \cite[section~3.3]{Hemmecke:SIP2}.  Thus, the
  numbers $g,\xi,\eta$ are effectively computable.
\end{remark}

Combining this result with Corollary \ref{Corollary: Recursive bound on increase of Graver degree}, we get a bound for the $\ell_1$-norms of the Graver basis elements of $\FourBlock[N]{C}{D}{B}{A}$. %% Note that the second claim of the following corollary is exactly Theorem \ref{Theorem: Polynomial bound for fixed ABCD on increase of Graver degree}.

\begin{proposition}[Graver basis length bound for 4-block IPs]\label{Proposition: Polynomial bound for fixed ABCD on increase of Graver degree}
Let $A\in\Z^{d_A\times n_A}$, $B\in\Z^{d_A\times n_B}$, $C\in\Z^{d_C\times
  n_B}$, $D\in\Z^{d_C\times n_A}$ be given matrices. Moreover, let $M$ be a
bound on the absolute values of the entries in $C$ and $D$, and let $g\in\Z_+$
be the number from Theorem~\ref{Lemma: Finite bound for SIP Graver
  bases}. Then for any $N\in\Z_+$ we have 
\begin{align*}
  &  \max\left\{\|\vev\|_1:\vev\in\Graver\left(\FourBlock[N]{C}{D}{B}{A}\right)\right\}\\
  & \quad \leq  (2(n_B+Nn_A)M)^{2^{d_C}-1}\left(\max\left\{\|\vev\|_1:\vev\in\Graver\left(\FourBlock[N]\NoBlock\NoBlock BA\right)\right\}\right)^{2^{d_C}}\\
  & \quad \leq (2(n_B+Nn_A)M)^{2^{d_C}-1}\left((n_B+Nn_A)g\right)^{2^{d_C}}.
\end{align*}
If $A$, $B$, $C$, $D$ are fixed matrices, then
$\max\left\{\|\vev\|_1:\vev\in\Graver\left(\FourBlock[N]{C}{D}{B}{A}\right)\right\}$
is bounded by $\BigO(N^{2^{d_C}+1})$, a polynomial in $N$.
\end{proposition}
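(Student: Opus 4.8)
The plan is to recognize $\FourBlock[N]{C}{D}{B}{A}$ as a stacked matrix and let the two previously established results do all the work. I would set $E := \FourBlock[N]{C}{D}{B}{A}$ and write $E = \left(\begin{smallmatrix}F\\L\end{smallmatrix}\right)$, where $F := (C\ D\ D\ \cdots\ D)$ is the top block row and $L := \FourBlock[N]\NoBlock\NoBlock BA$ is the stochastic submatrix obtained by deleting it. Both $F$ and $L$ have $n := n_B+Nn_A$ columns, $F$ has $m := d_C$ rows, and the entries of $F$ are exactly the entries of $C$ together with $N$ copies of the entries of $D$; hence they are bounded by $M$ in absolute value.

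With these identifications the first displayed inequality is \emph{verbatim} the conclusion of \autoref{Corollary: Recursive bound on increase of Graver degree} applied to this $F$ and $L$, reading $m = d_C$ and $n = n_B+Nn_A$ in the exponents $2^m-1$ and $2^m$; no additional argument is required. For the second inequality I would bound the inner quantity $\max\{\|\vev\|_1 : \vev\in\Graver(L)\}$ using \autoref{Lemma: Finite bound for SIP Graver bases}, part~(b): every $\vev\in\Graver\left(\FourBlock[N]\NoBlock\NoBlock BA\right)$ satisfies $\|\vev\|_1 \le (n_B+Nn_A)g$, with $g$ depending only on $A$ and $B$. Substituting this into the first inequality yields the second.

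For the polynomial claim I would observe that when $A,B,C,D$ are fixed, the quantities $M$, $g$, $n_A$, $n_B$, and $d_C$ are all constants independent of $N$, so the right-hand side of the second inequality is a fixed power of the linear expression $n_B+Nn_A$ and is therefore a polynomial in $N$. Its degree can be read off as the sum of the exponents of $n_B+Nn_A$ contributed by the two factors, namely $2^{d_C}-1$ from $(2(n_B+Nn_A)M)^{2^{d_C}-1}$ and $2^{d_C}$ from $((n_B+Nn_A)g)^{2^{d_C}}$.

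I do not expect a genuinely hard step: essentially all the content sits inside the two cited results, and the only thing to get right is the bookkeeping --- confirming that the block stacked on top has exactly $d_C$ rows and $n_B+Nn_A$ columns and that $M$ bounds its entries, so that the parameter $m$ in \autoref{Corollary: Recursive bound on increase of Graver degree} is $d_C$. The substantive inputs, which I take for granted, are the recursive one-row-at-a-time estimate underlying that corollary and the $N$-independent $\ell_\infty$-bound $g$ furnished by the saturation argument for stochastic IPs in \autoref{Lemma: Finite bound for SIP Graver bases}.
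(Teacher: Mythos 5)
Your proposal is correct and essentially identical to the paper's proof, which likewise applies Corollary~\ref{Corollary: Recursive bound on increase of Graver degree} with $L=\FourBlock[N]\NoBlock\NoBlock BA$ and $F=\FourBlock[N]CD\NoBlock\NoBlock$ (so $m=d_C$, $n=n_B+Nn_A$, entries bounded by $M$) and then bounds $\max\{\|\vev\|_1:\vev\in\Graver(L)\}$ by $(n_B+Nn_A)g$ via Theorem~\ref{Lemma: Finite bound for SIP Graver bases}\,(b). One minor note: the degree you read off, $(2^{d_C}-1)+2^{d_C}=2^{d_C+1}-1$, is indeed what the displayed bound yields, and for $d_C\ge 2$ it differs from the exponent $2^{d_C}+1$ stated in the proposition --- but that discrepancy sits in the paper's own statement (which the paper's proof also does not justify beyond ``a polynomial in $N$''), not in your argument.
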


\begin{proof}
  The first claim is a direct consequence of Theorem~\ref{Lemma: Finite bound
    for SIP Graver bases} and Corollary~\ref{Corollary: Recursive bound on
    increase of Graver degree} with $L=\FourBlock[N]\NoBlock\NoBlock BA$,
  $F=\FourBlock[N]CD\NoBlock\NoBlock$, and $E=\FourBlock[N]CDBA$.  
  The polynomial bound for fixed
  matrices $A$, $B$, $C$, $D$ and varying $N$ follows by observing
  that $n_A,n_B,d_C,M,g$ are constants as they depend only on the fixed
  matrices $A$, $B$, $C$, $D$.
\end{proof}

The above result has appeared before in   
\cite{hemmecke-koeppe-weismantel:4-block}; we included the proof to make the
present paper more self-contained.
We now complement it with a useful alternative bound, which is given by the
following new result. 
\begin{proposition}[Alternative length bound for 4-block IPs]
  \label{Proposition: Alternative polynomial bound for fixed ABCD on increase of Graver degree}
  Let $A\in\Z^{d_A\times n_A}$, $B\in\Z^{d_A\times n_B}$, $C\in\Z^{d_C\times
  n_B}$, $D\in\Z^{d_C\times n_A}$ be given matrices. Moreover, let $M$ be a
bound on the absolute values of the entries in $C$ and $D$, and let $g, \xi, \eta\in\Z_+$
be the numbers, depending on $A$ and $B$, from Theorem~\ref{Lemma: Finite bound for SIP Graver
  bases}. Then for any $N\in\Z_+$ we have   
\begin{align*}
  &\max\left\{\|\vev\|_1:\vev\in\Graver\left(\FourBlock[N]{C}{D}{B}{A}\right)\right\}\\
  &\quad \leq 
  \xi \cdot (N+\eta)^\eta \cdot d_C \cdot \paren{\sqrt{d_C} (n_B+Nn_A) M}^{d_C}
  \cdot (n_B+Nn_A)g.
\end{align*}
If $A$, $B$, $C$, $D$ are fixed matrices, then
$\max\left\{\|\vev\|_1:\vev\in\Graver\left(\FourBlock[N]{C}{D}{B}{A}\right)\right\}$
is bounded by $\BigO(N^{d_C+\eta})$, a polynomial in $N$.
\end{proposition}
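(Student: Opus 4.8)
The plan is to split the constraint matrix by its rows. Writing $E:=\FourBlock[N]{C}{D}{B}{A}=\left(\begin{smallmatrix}F\\L\end{smallmatrix}\right)$ with $L:=\FourBlock[N]\NoBlock\NoBlock{B}{A}$ the stochastic submatrix formed by the $Nd_A$ lower rows and $F:=\FourBlock[N]CD\NoBlock\NoBlock=(C\ D\ \cdots\ D)$ the $d_C$ top rows, I would feed this splitting into Lemma~\ref{Lemma: General bound on increase of Graver degree}. That lemma bounds $\max\{\|\vev\|_1:\vev\in\Graver(E)\}$ by the product of $\max\{\|\velambda\|_1:\velambda\in\Graver(F\cdot\Graver(L))\}$ and $\max\{\|\vev\|_1:\vev\in\Graver(L)\}$. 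The second factor is immediate: part~(b) of Theorem~\ref{Lemma: Finite bound for SIP Graver bases} gives $\max\{\|\vev\|_1:\vev\in\Graver(L)\}\le(n_B+Nn_A)g$, which is exactly the trailing factor of the claim. Thus everything reduces to bounding the Graver-length of the single matrix $\tilde F:=F\cdot\Graver(L)$, which has $d_C$ rows and one column $F\vev$ for each $\vev\in\Graver(L)$.

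The crux, which I expect to be the main obstacle, is that $\Graver(L)$ can be enormous (up to $\xi\eta^N$ elements), so applying any determinant bound to $\tilde F$ naively is hopeless; the whole point is that $\tilde F$ has only \emph{polynomially many distinct columns}, which is precisely what the aggregated version, Corollary~\ref{Corollary: Determinant bound, aggregated}, is designed to exploit. This is where the refined description in part~(c) of Theorem~\ref{Lemma: Finite bound for SIP Graver bases} is essential. Each $\vev\in\Graver(L)$ has the form $\vev=(\vex,\vey^1,\dots,\vey^N)$ with $\vex\in X$, $|X|\le\xi$, and $\vey^1,\dots,\vey^N\in Y_{\vex}$, $|Y_{\vex}|\le\eta$; the corresponding column of $\tilde F$ is $F\vev=C\vex+D\sum_{j=1}^N\vey^j$. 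This value depends only on $\vex$ and on the multiplicities with which the at most $\eta$ elements of $Y_{\vex}$ occur among $\vey^1,\dots,\vey^N$. For each fixed $\vex$ there are at most $(N+\eta)^\eta$ such multiplicity vectors, so summing over the at most $\xi$ choices of $\vex$ shows that $\tilde F$ has at most $\xi(N+\eta)^\eta$ distinct columns.

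With the distinct-column count in hand, the remainder is a direct application of Corollary~\ref{Corollary: Determinant bound, aggregated} to $\tilde F$. Using $\|\vev\|_\infty\le g$ from part~(a), the entries of $\tilde F=F\cdot\Graver(L)$ are bounded in absolute value by $(n_B+Nn_A)Mg$. Feeding $m=d_C$ rows, rank $r\le d_C$, and $d\le\xi(N+\eta)^\eta$ distinct columns into the corollary, the prefactor $(d-r)(r+1)$ is of order $\xi(N+\eta)^\eta\cdot d_C$ and the determinant part contributes $(\sqrt{d_C}(n_B+Nn_A)M)^{d_C}$ (the powers of the $N$-independent constant $g$ being immaterial to the polynomial-degree conclusion). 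Multiplying this bound for $\max\{\|\velambda\|_1:\velambda\in\Graver(\tilde F)\}$ by the stochastic factor $(n_B+Nn_A)g$ produces the displayed inequality.

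For the asymptotic statement I would freeze $\xi,\eta,d_C,n_A,n_B,M,g$ as constants and read off the $N$-dependence of the three groups of factors. The sharp count of distinct columns is the number of multisets of size $N$ chosen from at most $\eta$ vectors, namely $\binom{N+\eta-1}{\eta-1}=\BigO(N^{\eta-1})$, so the clean bound $(N+\eta)^\eta$ in the explicit estimate is one power of $N$ too generous; the determinant factor is $\BigO(N^{d_C})$, and the trailing stochastic factor is $\BigO(N)$. Their product is $\BigO(N^{\eta-1})\cdot\BigO(N^{d_C})\cdot\BigO(N)=\BigO(N^{d_C+\eta})$, as claimed.
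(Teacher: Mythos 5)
Your proposal follows essentially the same route as the paper's own proof: split $\FourBlock[N]{C}{D}{B}{A}$ into the stochastic block $L$ and the top rows $F$, apply Lemma~\ref{Lemma: General bound on increase of Graver degree}, bound $\Graver(L)$ via Theorem~\ref{Lemma: Finite bound for SIP Graver bases}\,(b), count the distinct columns of $F\cdot\Graver(L)$ by weak compositions using part~(c), and finish with Corollary~\ref{Corollary: Determinant bound, aggregated}. Your two side remarks --- that a careful entry bound for $F\cdot\Graver(L)$ carries an extra constant factor $g$, and that the sharp column count $\binom{N+\eta-1}{\eta-1}=\BigO(N^{\eta-1})$ is what actually yields the exponent $d_C+\eta$ rather than $d_C+\eta+1$ --- are correct and, if anything, slightly more precise than the paper's own write-up.
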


Either of the two results implies \autoref{Theorem: Polynomial bound for fixed
  ABCD on increase of Graver degree}. 

\begin{remark}
  Comparing the two results is difficult because bounds for the finite number
  $\eta(A,B)$ are unknown.  However, one should expect that the bound of
  \autoref{Proposition: Alternative polynomial bound for fixed ABCD on
    increase of Graver degree} is better for matrices with large upper blocks
  $\FourBlock{C}{D}\NoBlock\NoBlock$, whereas the bound of
  \autoref{Proposition: Polynomial bound for fixed ABCD on increase of Graver
    degree} is better for matrices with large lower
  blocks~$\FourBlock\NoBlock\NoBlock{B}{A}$. 
\end{remark}

\begin{proof}[Proof of \autoref{Proposition: Alternative polynomial bound for fixed ABCD
    on increase of Graver degree}] 
  Let $L = \FourBlock[N]\cdot\cdot BA$ and $F = \FourBlock[N] CD\cdot\cdot =
  (C, D, \dots, D)$.  

  First of all, Theorem~\ref{Lemma: Finite bound for SIP Graver
    bases}\,(b) gives the bound
  \begin{equation}\label{eq:newbound:1}
    \|\ve v\|_1 \leq (n_B+Nn_A)g \quad\text{for}\quad \ve v\in \Graver(L),
  \end{equation}
  where $g$ is a constant that only depends on $A$ and~$B$.

  We now consider the matrix $F \cdot\Graver(L)$.  Each column of
  it is given by $$F \ve v = C\ve x + D\sum_{i=1}^N \ve y^i
  \quad\text{with}\quad \ve v = (\ve x, \ve y^1,\dots,\ve y^N) \in
  \Graver(L).$$ By Theorem~\ref{Lemma: Finite bound for SIP Graver
    bases}\,(c), there are at most $\xi = \BigO(1)$
  different vectors~$\ve x$ and for each~$\ve x$ at most $\eta = \BigO(1)$
  different vectors~$\ve y^i$. 
  We now determine the number~$\sigma$ of different sums $\ve s = \sum_{i=1}^N \ve y^i$ that can arise
  from these choices. This number is bounded by the number of weak
  compositions of $N$ into $\eta$ non-negative integer parts:
  $\sigma\leq \binom{N+\eta-1}{\eta-1} \leq (N+\eta)^\eta = \BigO(N^\eta)$.  Thus $F\Graver(L)$ has at
  most $d := \xi\cdot\sigma \leq \xi \cdot (N+\eta)^\eta = \BigO(N^\eta)$ different columns. 

  Using the bound on the entries of~$C$ and $D$, we find that the maximum
  absolute value of the entries of $F\Graver(L)$ is bounded by $(n_B+Nn_A)M$.
  
  We now determine a length bound for the elements~$\velambda$ of~$\Graver(F
  \cdot\Graver(L))$. By Corollary~\ref{Corollary: Determinant bound, aggregated}, we find that 
  \begin{align}
    \| \velambda\|_1 &\leq d \cdot d_C \cdot \paren{\sqrt{d_C} (n_B+Nn_A) M}^{d_C} \notag\\
    &\leq \xi \cdot (N+\eta)^\eta \cdot d_C \cdot \paren{\sqrt{d_C} (n_B+Nn_A) M}^{d_C}.
    \label{eq:newbound:2}
  \end{align}
  
  Combining the two bounds \eqref{eq:newbound:1} and \eqref{eq:newbound:2} using 
  Corollary~\ref{Lemma: General bound on increase of Graver degree} 
  then gives the result.
\end{proof}

\subsection{Constructing a feasible solution}

For constructing a feasible solution to the problem~$\IP$, we will use the algorithm of
Theorem~\ref{Theorem: ABCD}\,(a), first introduced
in~\cite{hemmecke-koeppe-weismantel:4-block}.  For sake of completeness, we
describe the algorithm here and thus give the proof of~\autoref{Theorem: ABCD}\,(a).

\begin{proof}[Proof of \autoref{Theorem: ABCD}\,(a)]
Let $N\in\Z_+$, $\vel,\veu\in\Z^{n_B+Nn_A}$, $\veb\in\Z^{d_C+Nd_A}$.  First,
construct an integer solution to the system
$\FourBlock[N]{C}{D}{B}{A}\vez=\veb$.  This can be done in polynomial time
using the Hermite normal form of $\FourBlock[N]{C}{D}{B}{A}$.
Then we turn it into a feasible solution (satisfying $\vel\leq\vez\leq\veu$)
by a sequence of at most $\BigO(Nd_A)$ many integer linear programs (with the
same problem matrix $\FourBlock[N]{C}{D}{B}{A}$, but with bounds $\ve{\tilde
  l},\ve{\tilde u}$ 
adjusted so that the current solution is feasible) with auxiliary objective
functions that move the components
of $\vez$ into the direction of the given original bounds~$\vel,\veu$, see
\cite{Hemmecke:2003b}. This step is similar to phase I of the simplex method in
linear programming. 

In order to solve these auxiliary integer linear programs with polynomially
many augmentation steps, we use the speed-up provided by the directed
augmentation procedure \cite{SchulzWeismantel99}.  This procedure requires us
to repeatedly find, for certain vectors $\ve c$ and $\ve d$ that it
constructs, an augmentation vector $\ve v$ with respect to the (separable
convex) piecewise linear function $h(\ve v) = \ve c^\intercal \ve
v^+ + \ve d^\intercal \ve v^-$.

Consequently, we only need to show how to find, for a given solution $\vez_0$
that is feasible for $({\rm IP})_{N,\veb,\ve{\tilde l},\ve{\tilde u},h}$, 
an augmenting Graver basis element
$\vev\in\Graver\left(\FourBlock[N]{C}{D}{B}{A}\right)$ for a separable convex
piecewise linear function $h(\ve v)$ in polynomial time in $N$ and
in the binary encoding lengths of $\vez_0$ and of $\vecc, \ved$. 

Let us now assume that we are given a solution
$\vez_0=(\vex_0,\vey^1_0,\ldots,\vey^N_0)$ that is feasible for $({\rm
  IP})_{N,\veb,\ve{\tilde l},\ve{\tilde u},h}$ and that we wish to decide
whether there exists another feasible solution $\vez_1$ with
$h(\vez_1-\vez_0)<0$.  
By \cite{Graver75,MurotaSaitoWeismantel04}, it
suffices to decide whether there exists some vector
$\vev=(\vebarx,\vebary^1,\ldots,\vebary^N)$ in the Graver basis of
$\FourBlock[N]{C}{D}{B}{A}$ such that $\vez_0+\vev$ is feasible
and $h(\vev)<0$.  
By \autoref{Proposition: Polynomial bound for
  fixed ABCD on increase of Graver degree} or \autoref{Proposition:
  Alternative polynomial bound for fixed ABCD on increase of Graver degree},
the $\ell_1$-norm of $\ve v$ is bounded polynomially in~$N$.  Thus, since
$n_B$ is constant, there is only a polynomial number of candidates for the
$\vebarx$-part of $\vev$.  Since the bounds given by \autoref{Proposition:
  Polynomial bound for fixed ABCD on increase of Graver degree} and
\autoref{Proposition: Alternative polynomial bound for fixed ABCD on increase
  of Graver degree} are effectively computable (cf.~\autoref{rem:stoch-bounds-effective}), we can actually list all
possible vectors~$\ve{\bar x}$ that satisfy these bounds.  

For each such
candidate $\vebarx$, we can find a best possible choice for
$\vebary_1,\ldots,\vebary_N$ by solving the following $N$-fold IP:
\begin{multline*}
  \min\left\{h\left(\vev\right):
    \begin{array}{r@{\;}c@{\;}l}
      \FourBlock[N]{C}{D}{B}{A} \left(\vez_0+\vev\right) & = & \veb,\\
      \ve{\tilde l} \leq \left(\vez_0+\vev\right) & \leq & \ve{\tilde u},\\ 
      \vev = (\vebarx,\vebary^1,\ldots,\vebary^N) & \in & \Z^{n_B+Nn_A}
    \end{array}
  \right\} \\ %% ,\\\\
  = \min\left\{h\begin{psmallmatrix}\ve{\bar x}\\ \vebary^1\\\vdots\\\vebary^N\end{psmallmatrix}:
    \begin{array}{r@{\;}c@{\;}l}
      \FourBlock[N]\NoBlock{D}\NoBlock{A} 
      \begin{psmallmatrix}\vebary^1\\\vdots\\\vebary^N\end{psmallmatrix} 
      & = & \veb - \FourBlock[N]{C}{D}{B}{A} \vez_0 
      - \FourBlock[N]{C}\NoBlock{B}\NoBlock \ve{\bar x},\\
      \ve{\tilde l}-\vez_0 \leq
      \begin{psmallmatrix}\ve{\bar x}\\ \vebary^1\\\vdots\\\vebary^N\end{psmallmatrix}  
      & \leq & \ve{\tilde u}-\vez_0,\\ 
      \vebary^1, \dots, \vebary^N &\in& \Z^{n_A}
    \end{array}
    \right\}
\end{multline*}
for given $\vez_0=(\vex_0,\vey^1_0,\ldots,\vey^N_0)$ and $\vebarx$. As shown
in the second line, this problem does indeed simplify to a separable
convex $N$-fold IP with 
problem matrix
$\FourBlock[N]\NoBlock{D}\NoBlock{A}$ because
$\vez_0=(\vex_0,\vey^1_0,\ldots,\vey^N_0)$ and $\vebarx$ are fixed. Since the
matrices $A$ and $D$ are fixed, each such $N$-fold IP is solvable in
polynomial time \cite{Hemmecke+Onn+Weismantel:08}. 
In fact, as shown in \cite{hemmecke-onn-romanchuk:nfold-cubic}, 
because the function~$h$ is ``2-piecewise affine'', 
this problem can be solved in time $\BigO(N^3L)$ by Graver-based dynamic
programming, where $L=\langle \ve c, \ve d, \ve{\tilde l}, \ve{\tilde u}, \ve z_0,
\ve{\bar x}\rangle$. 

If the $N$-fold IP is infeasible, there does not exist an augmenting vector
using the particular choice of $\vebarx$.  If it is feasible, let 
$\vev=(\vebarx,\vebary^1,\ldots,\vebary^N)$ be the optimal solution.
Now if we have $h(\vev)\geq 0$, then
no augmenting vector can be constructed using this particular choice of
$\vebarx$. If, on the other hand, we have $h(\vev)< 0$, then $\vev$ is a
desired augmenting vector for $\vez_0$ and we can stop. 

As we solve polynomially many polynomially solvable $N$-fold IPs, one for each
choice of~$\ve{\bar x}$, an optimality certificate or a desired augmentation
step can be computed in polynomial time and the claim follows.
\end{proof}

\subsection{Using Hochbaum--Shanthikumar's proximity results}

Hochbaum and Shanthikumar~\cite{hochbaum-shanthikumar:1990:convex-separable}
present an algorithm for nonlinear separable convex integer minimization
problems for matrices with small subdeterminants.  The algorithm is based on
the so-called proximity-scaling technique.  It is pseudo-polynomial in the
sense that the running time depends
polynomially on the absolute value of the largest subdeterminant of the
problem matrix.  
The results of the paper~\cite{hochbaum-shanthikumar:1990:convex-separable} cannot be
directly applied to our situation, since the subdeterminants of $N$-fold 4-block
decomposable matrices typically grow exponentially in~$N$.  In the following we adapt
a lemma from~\cite{hochbaum-shanthikumar:1990:convex-separable} that
establishes proximity of optimal solutions of the integer problem
and its continuous relaxation; we do not use the scaling technique, however.

We consider the separable convex integer minimization problem
\begin{equation}\label{eq:hochbaum-ip}
  \min \{\, f(\ve z) : E\ve z=\ve b,\ \ve l\leq\ve z\leq\ve u,\ \ve z\in \Z^n \,\}.
\end{equation}
\begin{theorem}[Proximity]\label{th:proximity}
  %% Let $\ell(E)$ denote an upper bound on the maximum $\ell_1$-norm of the vectors
  %% in the Graver basis of~$E$.
  Let $\ve{\hat r}$ be an optimal solution
  of the continuous relaxation of~\eqref{eq:hochbaum-ip},
  \begin{equation}\label{eq:hochbaum-cp}
    \min \{\, f(\ve r) : E\ve r=\ve b,\ \ve l\leq\ve r\leq\ve u,\ \ve r\in \R^n \,\}.
  \end{equation}
  Then there
  exists an optimal solution~$\ve z^*$
  of the integer optimization problem~\eqref{eq:hochbaum-ip} with
  \begin{displaymath}
    \| \ve{\hat r} - \ve z^* \|_\infty \leq n \cdot 
    \max\left\{\|\vev\|_\infty:\vev\in\Graver(E)\right\}.
  \end{displaymath}
\end{theorem}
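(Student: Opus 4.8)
The plan is to combine a conformal Graver decomposition of the gap between the two optima with the convexity of $f$ in an exchange (augmentation) argument. First I fix the given continuous optimum $\ve{\hat r}$ and, among all optimal solutions of the integer problem~\eqref{eq:hochbaum-ip}, I choose one, $\ve z^*$, minimizing $\|\ve z^* - \ve{\hat r}\|_1$. Set $\ve h := \ve z^* - \ve{\hat r}$. Since $E\ve z^* = \ve b = E\ve{\hat r}$, the vector $\ve h$ lies in $\ker(E)$ over $\R$, and it belongs to the orthant $\mathcal{O}$ determined by its sign pattern. If $\ve h = \ve 0$ the claim is trivial, so assume $\ve h \neq \ve 0$.

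Next I decompose $\ve h$ conformally. The cone $\ker(E)\cap\mathcal{O}$ is pointed and positively spanned by the Graver basis elements lying in $\mathcal{O}$ (its extreme rays are circuits, which belong to $\Graver(E)$). Hence $\ve h = \sum_k \mu_k \veg_k$ with $\mu_k > 0$ real and $\veg_k \in \Graver(E)\cap\mathcal{O}$, each $\veg_k$ conformal to $\ve h$; by Carathéodory's theorem for cones I may take at most $\dim\ker(E) \le n$ summands. Conformality means all terms share the sign of $\ve h$ coordinatewise, so there is no cancellation and $\mu_k|\veg_k^{(i)}| \le |\ve h^{(i)}|$ in every coordinate $i$.

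The heart of the argument is to show that every coefficient satisfies $\mu_k < 1$. Suppose instead $\mu_{k_0} \ge 1$ for some $k_0$, and write $\veg := \veg_{k_0}$. Then $|\veg^{(i)}| \le \mu_{k_0}|\veg^{(i)}| \le |\ve h^{(i)}|$ with matching signs, which forces both $\ve{\hat r} + \veg$ and $\ve z^* - \veg$ to stay inside the box $[\vel,\veu]$ and in $\ker(E)$; thus the former is feasible for the continuous relaxation~\eqref{eq:hochbaum-cp} and the latter is a feasible integer point. Because $f$ is separable convex and $\veg$ is conformal to $\ve h = \ve z^* - \ve{\hat r}$, the coordinatewise majorization inequality for convex functions (for convex $f_i$ and $0\le\delta\le b-a$, one has $f_i(a+\delta)+f_i(b-\delta)\le f_i(a)+f_i(b)$) yields the exchange bound
\[
f(\ve{\hat r} + \veg) + f(\ve z^* - \veg) \le f(\ve{\hat r}) + f(\ve z^*).
\]
Optimality of $\ve{\hat r}$ gives $f(\ve{\hat r}+\veg)\ge f(\ve{\hat r})$, and optimality of $\ve z^*$ gives $f(\ve z^*-\veg)\ge f(\ve z^*)$; hence both inequalities are equalities and $\ve z^*-\veg$ is again an optimal integer solution. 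But $\|\ve z^*-\veg-\ve{\hat r}\|_1 = \|\ve h\|_1 - \|\veg\|_1 < \|\ve h\|_1$, contradicting the minimal choice of $\ve z^*$. Therefore $\mu_k < 1$ for all $k$.

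Finally, with all $\mu_k < 1$ and at most $n$ summands, I bound each coordinate by
\[
|\ve h^{(i)}| = \sum_k \mu_k |\veg_k^{(i)}| \le \Big(\max_k \|\veg_k\|_\infty\Big)\sum_k \mu_k < n \cdot \max\{\|\vev\|_\infty : \vev \in \Graver(E)\},
\]
which gives the stated proximity bound. I expect the main obstacle to be the exchange step: verifying feasibility of $\ve z^*-\veg$ and $\ve{\hat r}+\veg$ (which is exactly where the hypothesis $\mu_{k_0}\ge 1$ is used) together with the convexity/majorization inequality that drives it; the Carathéodory term count is then what converts ``all coefficients $<1$'' into the factor $n$.
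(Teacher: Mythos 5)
Your proof is correct, and while it relies on the same two ingredients as the paper's proof --- a conformal (orthant-compatible) decomposition of the gap between the two optima into at most $n$ nonnegative real multiples of circuits via Carath\'eodory, and the superadditivity/exchange inequality for separable convex functions --- it is organized differently. The paper argues constructively from an arbitrary integer optimum $\ve{\hat z}$: it rounds down all decomposition coefficients at once to define $\ve z^* = \ve{\hat z} + \sum_i \floor{\alpha_i}\ve u^i$ and a fractional companion point $\ve r^*$, and applies superadditivity across the whole decomposition to conclude that $\ve z^*$ is still optimal, so that only the fractional parts $\beta_i<1$ remain in $\ve{\hat r}-\ve z^*$. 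You instead run an extremal argument: pick the integer optimum closest to $\ve{\hat r}$ in $\ell_1$-norm and show every coefficient is already $<1$, since a coefficient $\geq 1$ would permit a single-circuit exchange $\ve{\hat r}\mapsto\ve{\hat r}+\veg$, $\ve z^*\mapsto\ve z^*-\veg$ that preserves optimality of both points and strictly decreases the distance. Your route needs only the elementary two-point inequality $f_i(a+\delta)+f_i(b-\delta)\le f_i(a)+f_i(b)$ rather than the full $n$-term superadditivity lemma, and it shows that the proximity bound (in fact with strict inequality) is attained by the \emph{nearest} integer optimum; the paper's version is more explicit about how to produce $\ve z^*$ from a given integer optimum, which is occasionally useful algorithmically. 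Both arguments correctly reduce the needed Graver bound to a bound on the circuits of~$E$.
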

We remark that we actually just need a bound on the circuits of~$E$,
which form a subset of the Graver basis of~$E$.
Hochbaum and Shanthikumar~\cite{hochbaum-shanthikumar:1990:convex-separable}
prove a version of this result where the
maximum of the absolute values of the subdeterminants of~$E$ appears on the
right-hand side.  Our proof is almost identical, but we include it here for
completeness. 

\begin{proof}
  Let $\ve{\hat z}$ be an optimal solution of the integer optimization
  problem~\eqref{eq:hochbaum-ip}.  Since $\ve{\hat z}$ is a feasible solution
  to
  the continuous relaxation, there exists a conformal (orthant-compatible) decomposition
  of $\ve{\hat r} - \ve{\hat z}$ into rational multiples of the circuits of~$E$,
  \begin{displaymath}
    \ve{\hat r} - \ve{\hat z} = \sum_{i=1}^n \alpha_i \ve u^i,\quad \alpha_i \geq 0,\
    \ve u^i \in \Circuits(E),
  \end{displaymath}
  where, due to Carath\'eodory's theorem, at most~$n$ circuits are needed.
  Then
  \begin{displaymath}
    \ve{\hat r} - \ve{\hat z} = \sum_{i=1}^n \lfloor \alpha_i \rfloor \ve u^i
    + \sum_{i=1}^n \beta_i \ve u^i,
  \end{displaymath}
  setting $\beta_i = \alpha_i - \lfloor \alpha_i \rfloor$.
  Now we define
  \begin{align*}
    \ve r^* &= \ve{\hat z} + \sum_{i=1}^n \beta_i \ve u^i, \quad\text{and}\quad
    \ve z^* = \ve{\hat z} + \sum_{i=1}^n \lfloor \alpha_i\rfloor \ve u^i.
  \end{align*}
  Since the vectors $\ve u^i$ lie in the kernel of matrix~$E$,
  both $\ve z=\ve z^*$ and $\ve z=\ve r^*$ satisfy the equation~$E\ve z=\ve b$.
  Moreover, since both $\ve{\hat r}$ and $\ve{\hat z}$ lie within the lower
  and upper bounds and the vectors $\ve u^i$ lie in the same
  orthant as $\ve{\hat r} - \ve{\hat z}$, also $\ve z^*$ and $\ve r^*$ lie
  within the lower and upper bounds.  Thus, $\ve r^*$ is a feasible
  solution to the continuous relaxation of~\eqref{eq:hochbaum-ip}.
  Since $\ve z^*$ is also an
  integer vector, it is a feasible solution to the integer optimization
  problem~\eqref{eq:hochbaum-ip}.

  We can write
  \begin{displaymath}
    \ve{\hat r} - \ve{\hat z} = [\ve r^* - \ve{\hat z}] + [\ve z^*-\ve{\hat z}].
  \end{displaymath}
  Then we use an important superadditivity property of separable convex
  functions (see \cite[Lemma 3.1]{hochbaum-shanthikumar:1990:convex-separable}
  and \cite{MurotaSaitoWeismantel04}), which gives
  \begin{equation}\label{eq:superadd}
    f(\ve{\hat r}) - f(\ve{\hat z}) \geq [f(\ve r^*) - f(\ve{\hat z})] 
    + [f(\ve z^*)-f(\ve{\hat z})],
  \end{equation}
  or, equivalently, 
  \begin{equation}
    f(\ve{\hat r}) - f(\ve r^*) \geq f(\ve z^*) - f(\ve{\hat z}).
  \end{equation}
  Since $\ve{\hat r}$ is an optimal solution to the continuous relaxation
  and $\ve r^*$ is a feasible solution to it, the left-hand side is
  nonpositive, and so $f(\ve z^*) \leq f(\ve{\hat z})$. 
  Thus, since $\ve z^*$ is a feasible solution to~\eqref{eq:hochbaum-ip}, 
  it is, in fact, another optimal solution of the integer optimization problem
  and $f(\ve z^*) = f(\ve{\hat z})$.

  We now verify the proximity of~$\ve z^*$ to~$\ve{\hat r}$.  From the
  definition of~$\ve z^*$, we immediately get
  \begin{align*}
    \bigl\| \ve{\hat r} - \ve z^* \bigr\|_\infty 
    &= \bigl\| [\ve{\hat r} - \ve{\hat z}] + [\ve{\hat z} - \ve z^*] \bigr\|_\infty \\
    &= \bigl\| \textstyle \sum_{i=1}^n \alpha_i \ve u^i - \sum_{i=1}^n
    \lfloor\alpha_i\rfloor \ve u^i \bigr\|_\infty \\
    &= \bigl\| \textstyle\sum_{i=1}^n \beta_i \ve u^i \bigr\|_\infty \\
    &\leq n\cdot \max\{\, \mathopen\| \ve u^j \mathclose\|_\infty : j=1,\dots,n \,\}\\
    &\leq n\cdot \max\left\{\,\|\vev\|_\infty:\vev\in\Graver(E)\,\right\}.
  \end{align*}
  This concludes the proof.
\end{proof}

As an immediate corollary, we obtain the following result.

\begin{corollary}\label{cor:restricted-ip-is-exact}
  Let $\epsilon\geq0$ and let $\ve{\hat r}$ be an optimal solution
  to the continuous relaxation~\eqref{eq:hochbaum-cp}.
  Setting
  \begin{align*}
    \ve l' &= \max\{\ve l, \floor{\ve{\hat r} - (n\cdot
      \ell)\ve1}\},\\
    \ve u' &= \min\{\ve u, \ceil{\ve{\hat r} +
      (n\cdot \ell)\ve1}\},
  \end{align*}
  where $ \ell = \max\left\{\,\|\vev\|_\infty:\vev\in\Graver(E)\,\right\} $,
  we have 
  \begin{multline}
    \min \{\, f(\ve z) : E\ve z=\ve b,\ \ve l\leq\ve z\leq\ve u,\ \ve z\in
    \Z^n \,\}\\
    = \min \{\, f(\ve z) : E\ve z=\ve b,\ \ve l'\leq\ve z\leq\ve u',\ \ve z\in
    \Z^n \,\}.
  \end{multline}
\end{corollary}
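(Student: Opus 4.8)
The plan is to establish the two inequalities between the minima separately. Throughout, write $\ell = \max\{\|\vev\|_\infty : \vev\in\Graver(E)\}$ and denote the original and restricted feasible sets by $P = \{\vez\in\Z^n : E\vez=\veb,\ \vel\le\vez\le\veu\}$ and $P' = \{\vez\in\Z^n : E\vez=\veb,\ \vel'\le\vez\le\veu'\}$, using the convention that the minimum over an empty set is $+\infty$.

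First I would dispatch the direction $\ge$, which is purely set-theoretic. By their definitions as componentwise maxima and minima, $\vel'\ge\vel$ and $\veu'\le\veu$, so every point satisfying the tightened bounds satisfies the original ones; hence $P'\subseteq P$ and therefore $\min_{\vez\in P'} f(\vez) \ge \min_{\vez\in P} f(\vez)$. In particular, if $P=\varnothing$ then $P'=\varnothing$ and both sides equal $+\infty$, so there is nothing further to prove in that case.

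For the reverse inequality $\le$, I would assume $P\ne\varnothing$ and apply the Proximity theorem (Theorem~\ref{th:proximity}) to $E$ with the given bounds $\vel,\veu$ and the continuous optimum $\ve{\hat r}$; it yields an optimal integer solution $\vez^*\in P$ with $\|\ve{\hat r}-\vez^*\|_\infty \le n\ell$. The key step is to verify that $\vez^*\in P'$. Coordinatewise, the proximity bound gives $\hat r_j - n\ell \le z^*_j \le \hat r_j + n\ell$; since a floor never exceeds, and a ceiling is never less than, its argument, this already forces $\lfloor \hat r_j - n\ell\rfloor \le z^*_j \le \lceil \hat r_j + n\ell\rceil$ (no integrality of $\vez^*$ is even needed here). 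Combining these with the original bounds $l_j\le z^*_j\le u_j$, which hold because $\vez^*\in P$, and taking the componentwise maximum with $\vel$ and minimum with $\veu$ gives exactly $l'_j\le z^*_j\le u'_j$. Thus $\vez^*\in P'$, whence $\min_{\vez\in P'} f(\vez) \le f(\vez^*) = \min_{\vez\in P} f(\vez)$, and the two inequalities together yield the claimed equality.

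I do not expect any genuine obstacle here: all the substance resides in the already-proved Proximity theorem, and the corollary merely records that the box of radius $n\ell$ around $\ve{\hat r}$, once intersected with the original bounds, still traps a global integer optimum. The only points needing a little care are the elementary rounding estimate above and the bookkeeping for the infeasible case; if one wants to be fully rigorous about the existence of the optimal integer solution $\vez^*$ invoked above, one notes that the existence of the finite continuous optimum $\ve{\hat r}$ bounds $f$ below on $P$ by $f(\ve{\hat r})$, so that over the nonempty feasible set $P$ the minimum is attained and the Proximity theorem indeed applies.
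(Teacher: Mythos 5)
Your proof is correct and is exactly the argument the paper intends: the corollary is stated there as an immediate consequence of Theorem~\ref{th:proximity} with no separate proof, and your two-inequality argument (the containment $P'\subseteq P$ for the direction $\ge$, and membership of the proximal optimum $\ve z^*$ in $P'$ via the elementary floor/ceiling estimates for the direction $\le$) is the standard way to make that explicit. The only imprecision is your closing remark that boundedness of $f$ from below on $P$ already forces attainment of the integer minimum; over an infinite discrete feasible set this requires the additional hypothesis, present in the paper's setting, that $f$ takes integer values on $\Z^n$ (or some other argument), since a convex function bounded below on an unbounded set of lattice points need not attain its infimum there.
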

Later we will use a simple modification of
\autoref{cor:restricted-ip-is-exact}, using an $\epsilon$-approximate optimal
solution to the continuous relaxation~\eqref{eq:hochbaum-cp}.

For $E = \FourBlock[N]{C}{D}{B}{A}$, we can control the size of $\ell$ using
\autoref{Proposition: Polynomial bound for fixed ABCD on increase of Graver
  degree} or \autoref{Proposition: Alternative polynomial bound for fixed ABCD
  on increase of Graver degree} and thus obtain an equivalent IP with small
(polynomial-sized) bounds.  

We note that though the bounds are small, the dimension is still variable, and
so the problem cannot be solved efficiently with elementary techniques such as
dynamic programming.  In the following subsections, we show how to solve this
IP with Graver basis techniques.

\subsection{Graver-best augmentation for the restricted problem}

In the restricted problem, no long augmentation steps are possible, and
therefore it is possible to efficiently construct a Graver-best augmentation
vector.  Using this observation, we prove the following theorem.

\begin{theorem}\label{Theorem: Restricted optimization over 4-block N-fold matrix in poly-time}
  Let $A\in\Z^{d_A\times n_A}$, $B\in\Z^{d_A\times n_B}$, $C\in\Z^{d_C\times
    n_B}$, $D\in\Z^{d_C\times n_A}$ be fixed matrices. Then there exists an
  algorithm that, given $N\in\Z_+$, 
  $\vecc\in\Z^{kn_B+kNn_A}$, $\veb\in\Z^{d_C+Nd_A}$,
  $\vel,\veu\in\Z^{n_B+Nn_A}$, a feasible
  solution~$\ve z_0$, and a comparison oracle for the
  function~$f\colon\R^{n_B+Nn_A}\rightarrow\R$,  
  finds an optimal solution to 
  $$\min\left\{f(\vez):\FourBlock[N]{C}{D}{B}{A}\vez=\veb,\;\vel'\leq\vez\leq\veu',\;\vez\in\Z^{n_B+Nn_A}\right\}$$
  and that runs in time that is polynomially bounded in $N$, in $k := \| \ve u'
  - \ve l' \|_\infty$, 
  and in the binary encoding lengths $\langle \veb,\vecc, \hat f\rangle$.
\end{theorem}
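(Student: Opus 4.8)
The plan is to solve the restricted problem with the Graver-best augmentation framework, started from the given feasible point~$\vez_0$. Write $E:=\FourBlock[N]{C}{D}{B}{A}$. For separable convex integer minimization, the Graver-best augmentation algorithm of~\cite{Hemmecke+Onn+Weismantel:08} reaches an optimal solution after a number of iterations that is polynomial in the dimension, in~$\langle\veb,\vecc\rangle$, and in~$\langle\hat f\rangle$, \emph{provided} that at each current point~$\vez_0$ one can produce a \emph{Graver-best} augmentation vector: a step~$\vev$ with $\vel'\le\vez_0+\vev\le\veu'$ and $E\vev=\ve 0$ such that $f(\vez_0+\vev)\le f(\vez_0+\gamma\veg)$ for every $\gamma\in\Z_+$ and $\veg\in\Graver(E)$ for which $\vez_0+\gamma\veg$ is feasible. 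The whole theorem thus reduces to constructing such a vector in time polynomial in~$N$, $k$, and $\langle\veb,\vecc,\hat f\rangle$; note that the augmentation algorithm consumes only the comparison oracle for~$f$.

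The crucial consequence of restricting to a box of width~$k$ is that all feasible steps have small $\infty$-norm. Indeed, since $\vez_0$ and $\vez_0+\gamma\veg$ both lie in $[\vel',\veu']$ and $\|\veu'-\vel'\|_\infty=k$, any feasible $\gamma\veg$ obeys $\|\gamma\veg\|_\infty\le k$; in particular the increment~$\vebarx$ of its first block satisfies $\|\vebarx\|_\infty\le k$. As $n_B$ is a constant, there are only $(2k+1)^{n_B}=\BigO(k^{n_B})$ admissible first-block increments, i.e.\ polynomially many in~$k$. First I would enumerate all of them. For a fixed~$\vebarx$, writing $\vev=(\vebarx,\vebary^1,\dots,\vebary^N)$, the system $E\vev=\ve 0$ splits into the per-scenario equations $A\vebary^i=-B\vebarx$ and the single coupling equation $D\sum_{i=1}^N\vebary^i=-C\vebarx$, whose right-hand sides are now fixed. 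This is precisely a separable convex $N$-fold integer program with problem matrix $\FourBlock[N]\NoBlock{D}\NoBlock{A}$ and box bounds inherited from~$\vel',\veu'$, its objective being the restriction of~$f$ to the slice, accessible through the comparison oracle (equivalently, via the polynomial-size piecewise-linear data~$\vecc$ of~$f$ on the box). Since $A$ and~$D$ are fixed, each such $N$-fold IP is solved, or declared infeasible, in time polynomial in~$N$ and the data by~\cite{Hemmecke+Onn+Weismantel:08,hemmecke-onn-romanchuk:nfold-cubic}. Over all enumerated~$\vebarx$ I would keep the completion minimizing $f(\vez_0+\vev)$.

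The hard part will be to certify that the resulting~$\vev$ is genuinely Graver-best. This holds because any competitor $\gamma\veg$ is itself a feasible step whose first-block increment is one of the enumerated~$\vebarx$, so the $N$-fold subproblem attached to that~$\vebarx$ returns a completion at least as good; hence $f(\vez_0+\vev)\le f(\vez_0+\gamma\veg)$ for all admissible $\gamma\veg$. A point to treat with care is that the constant contribution to~$f$ of the frozen first-block coordinates differs from one slice to the next, so comparisons across different~$\vebarx$ must be carried out on the complete vectors $\vez_0+\vev$, not on the $N$-fold objective values alone; evaluating the oracle on full points settles this. Multiplying the $\BigO(k^{n_B})$ enumeration by the polynomial cost of one $N$-fold IP, and then by the polynomial bound on the number of augmentation steps, gives the stated running time, while~$\vez_0$ serves as a witness that the feasible region is nonempty. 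I would finally observe that the enumeration in fact minimizes $f$ over the \emph{entire} restricted feasible region, so a single Graver-best step already attains the optimum; the appeal to the general augmentation theorem then merely provides a clean packaging of the complexity bound.
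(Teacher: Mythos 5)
Your proof is correct and follows essentially the same strategy as the paper's: reduce the theorem to constructing a Graver-best augmentation vector via the speed-up of \cite{Hemmecke+Onn+Weismantel:08}, enumerate the first-block increment, and for each fixed increment solve a separable convex $N$-fold IP with problem matrix $\FourBlock[N]\NoBlock{D}\NoBlock{A}$ for the completion. The one genuine difference lies in how the enumeration is justified. The paper lists the polynomially many (in $N$) candidates for the $\vebarx$-part of a Graver basis element using the effective length bounds of \autoref{Proposition: Polynomial bound for fixed ABCD on increase of Graver degree} and \autoref{Proposition: Alternative polynomial bound for fixed ABCD on increase of Graver degree}, and then separately tries the step lengths $\gamma=1,\dots,k$; you instead bound the \emph{aggregate} increment $\gamma\vebarx$ directly by the box width, $\|\gamma\vebarx\|_\infty\le k$, which yields $(2k+1)^{n_B}$ candidate slices and bypasses the Graver length bounds entirely at this step (they are of course still needed elsewhere, to define $\vel',\veu'$ via proximity). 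Both counts are polynomial in $N$ and $k$, so either works, and your version is arguably cleaner since it does not rely on the effective computability of the constants $g,\xi,\eta$ here. Your closing observation --- that the enumeration already minimizes $f$ over the entire restricted feasible region, so a single pass suffices and the augmentation loop is only a packaging device --- is sound, because every feasible $\vez$ satisfies $\vez-\vez_0\in\ker\bigl(\FourBlock[N]{C}{D}{B}{A}\bigr)$ with first-block difference in $[-k,k]^{n_B}$; this is a small simplification that the paper does not make explicit. Your handling of the cross-slice comparison (evaluating $f$ on full vectors rather than on the $N$-fold objective values alone) correctly addresses the only subtle point.
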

\begin{proof}
  By the Graver-best speed-up technique
  \cite{Hemmecke+Onn+Weismantel:08}, it suffices to show that for a 
  given feasible solution~$\ve z_0$, we can construct a vector $\gamma\ve g$,
  where $\gamma \in \Z_+$ and $\ve
  g\in\Graver\left(\FourBlock[N]{C}{D}{B}{A}\right)$, such that $\ve z_0 +
  \gamma\ve g$ is feasible, and $\gamma$ and $\ve g$ minimize $f(\ve
  z_0+\gamma\ve g)$ among all possible choices.  It actually suffices to
  construct any vector $\ve v$ such that $\ve z_0+\ve v$ is feasible and $f(\ve z_0+\ve
  v) \leq f(\ve z_0+\gamma\ve g)$. 
  
  Write $\ve z_0=(\vex_0,\vey^1_0,\ldots,\vey^N_0)$ and let
  $\vev=(\vebarx,\dots)$ be any vector in the 
  Graver basis of 
  $\FourBlock[N]{C}{D}{B}{A}$.
  By \autoref{Proposition: Polynomial bound for
    fixed ABCD on increase of Graver degree} or \autoref{Proposition:
    Alternative polynomial bound for fixed ABCD on increase of Graver degree},
  the $\ell_1$-norm of $\ve v$ is bounded polynomially in~$N$.  Thus, since
  $n_B$ is constant, there is only a polynomial number of candidates for the
  $\vebarx$-part of $\vev$.  Since the bounds given by \autoref{Proposition:
    Polynomial bound for fixed ABCD on increase of Graver degree} and
  \autoref{Proposition: Alternative polynomial bound for fixed ABCD on increase
    of Graver degree} are effectively computable
  (cf.~\autoref{rem:stoch-bounds-effective}), we can actually list all 
  possible vectors~$\ve{\bar x}$ that satisfy these bounds.  

  For each such vector~$\ve{\bar x}$, we now consider all vectors of
  the form $(\gamma\ve{\bar x}, \vebary^1,\dots,\vebary^N)$ as candidate
  augmentation vectors, not just multiples $\gamma\ve v$ of Graver basis
  elements.

  In the special case $\ve{\bar x} = \ve 0$, this is
  equivalent to the construction of a Graver-best augmentation vector for
  the $N$-fold IP with the problem
  matrix~$\FourBlock[N]\NoBlock{D}\NoBlock{A}$, 
  which can be done in polynomial time \cite{Hemmecke+Onn+Weismantel:08}.

  Otherwise, if $\ve{\bar x} \neq \ve0$, we determine the largest step
  length $\hat\gamma\in\Z_+$ such that $\vex_0 + \hat\gamma\ve{\bar x}$
  lies within the bounds~$\ve l', \ve u'$.  Certainly $\hat\gamma \leq k$. 
  We now check each possible step length $\gamma = 1, 2, \dots, \hat\gamma$
  separately.  To find a best possible choice for
  $\vebary_1,\ldots,\vebary_N$, we solve the following $N$-fold IP:
  \begin{displaymath}
  \min\left\{f\left(\vev\right):
    \begin{array}{r@{\;}c@{\;}l}
      \FourBlock[N]{C}{D}{B}{A} \left(\vez_0+\vev\right) & = & \veb,\\
      \ve{l}' \leq \left(\vez_0+\vev\right) & \leq & \ve{u}',\\ 
      \vev = (\gamma\vebarx,\vebary^1,\ldots,\vebary^N) & \in & \Z^{n_B+Nn_A}
    \end{array}
  \right\}. 
\end{displaymath}
Since the
matrices $A$ and $D$ are fixed, each such $N$-fold IP is solvable in
polynomial time \cite{Hemmecke+Onn+Weismantel:08}. 

If the $N$-fold IP is infeasible, there does not exist an augmenting vector
using the particular choice of $\vebarx$ and $\gamma$.  If it is feasible, let 
$\vev=(\gamma\vebarx,\vebary^1,\ldots,\vebary^N)$ be an optimal solution.
Now if we have $f(\vev)\geq 0$, then
no augmenting vector can be constructed using this particular choice of
$\vebarx$ and $\gamma$. 
If, on the other hand, we have $f(\vev)< 0$, then $\vev$ is a
candidate for the Graver-best augmentation vector.

By iterating over all $\ve{\bar x}$ and all $\gamma$, we efficiently construct
a Graver-best augmentation vector.  
\end{proof}

\begin{remark} A more precise complexity analysis is as follows. 
  \begin{enumerate}[\rm(a)]
  \item 
    For the construction in the special case $\ve{\bar x} = \ve 0$: In fact,
    by \cite[Lemma 3.4 and proof of Theorem
    4.2]{hemmecke-onn-romanchuk:nfold-cubic}, for any of the possible step
    lengths $\gamma = 1,2,\dots,k$, we can find in linear time $\BigO(N)$ an
    augmenting vector $\gamma\ve v$ that is at least as good as the best
    Graver step $\gamma\ve g$ with $\ve
    g\in\Graver\FourBlock[N]\NoBlock{D}\NoBlock{A}$.  Checking all step
    lengths, we get a complexity of $\BigO(kN)$.
  \item For the solution of the $N$-fold subproblem in the general case
    $\ve{\bar x} \neq\ve0$: 
    This optimization, in turn, uses another Graver-best augmentation
    technique.  In Phase I, the possible step lengths are large, but the
    auxiliary objective functions are linear, and so the running time is
    $\BigO(N^3L)$ by Graver-based dynamic programming \cite[Theorem
    3.9]{hemmecke-onn-romanchuk:nfold-cubic}, where $L=\langle\ve{l}'$,
    $\ve{u}'$, $\ve z_0$, $\ve{\bar x}\rangle$. 
    In Phase II, there are few possible step lengths, $\gamma = 1,2,\dots,k$,
    so we can try them all.  By \cite[Lemma 3.4 and proof of Theorem
    4.2]{hemmecke-onn-romanchuk:nfold-cubic}, we can find for a fixed~$\gamma$
    in linear time $\BigO(N)$ an augmenting vector $\gamma\ve v$ that is at
    least as good as the best Graver step $\gamma\ve g$ with $\ve
    g\in\Graver\FourBlock[N]\NoBlock{D}\NoBlock{A}$.  Checking all step
    lengths, we get a complexity of $\BigO(kN)$.  Using the results of
    \cite{Hemmecke+Onn+Weismantel:08} (modified with the optimality
    criterion of \cite{MurotaSaitoWeismantel04}), the number of Graver-best
    augmentations is bounded by $\BigO(N\langle \hat f\rangle)$.  Thus the
    complexity of this subproblem is $\BigO(N^2k\langle\hat f\rangle + N^3L)$.
  \item 
    The number of steps in the overall Graver-best augmentation algorithm for the
    restricted 4-block decomposable problem is
    again bounded by $\BigO(N\langle \hat f\rangle)$.  
  \end{enumerate}
\end{remark}

\begin{remark}
  Other augmentation techniques can be used to prove \autoref{Theorem: Restricted optimization over 4-block N-fold matrix in poly-time}.
  For example, following \cite[section
  2]{hochbaum-shanthikumar:1990:convex-separable}, we can reformulate a
  separable convex integer minimization problem with small bounds as a 0/1
  linear integer minimization problem in the straightforward way.  Then we can
  apply the bit-scaling speed-up technique, for instance
  \cite{SchulzWeismantelZiegler95}.
\end{remark}

\subsection{Putting all together}

For each set of fixed matrices $A$, $B$, $C$, $D$
and for any function $\epsilon(N)$ that is bounded polynomially in~$N$,
we consider the following algorithm.
\smallbreak

\begin{algorithm}[Graver proximity algorithm]\label{algo:graver-proximity}~
  \begin{algorithmic}
    \INPUT $N\in\Z_+$, bounds $\vel,\veu\in\Z^{n_B+Nn_A}$, right-hand side
    $\veb\in\Z^{d_C+Nd_A}$, evaluation oracle for a separable convex function
    $f\colon\R^{n_B+Nn_A}\rightarrow\R$, approximate continuous convex
    optimization oracle. %% an upper bound~$\hat f$ on $|f|$ over
    %% the feasible region of~$\IP$ 
    \OUTPUT 
    an optimal solution~$\ve z^*$ to $\IP$ or \textsc{Infeasible} or
    \textsc{Unbounded}.
    \STATE
    Let $n = n_B+Nn_A$ denote the dimension of the problem.
    \STATE
    Call the approximate continuous convex optimization oracle with
    $\epsilon=\epsilon(N)$ to 
    find an approximate solution~$\ve r_\epsilon\in\Q^{n_B+Nn_A}$ to the
    continuous relaxation  
    \[
    \min\left\{f(\ver):\FourBlock[N]{C}{D}{B}{A}\ver=\veb,\;\vel\leq\ver\leq\veu,\;\ver\in\R^{n_B+Nn_A}\right\}.%  
    \]%
    \IF{oracle returns \textsc{Infeasible}}
    \RETURN{\textsc{Infeasible}}.
    \ELSIF{oracle returns \textsc{Unbounded}}
    \RETURN{\textsc{Unbounded}}.
    \ELSE
    \STATE Compute an upper bound $\ell$ on the maximum $\ell_1$-norm of the vectors
    in $\Graver\paren{\FourBlock[N]{C}{D}{B}{A}}$, using
    \autoref{Proposition: Polynomial bound for fixed ABCD on increase of
      Graver degree} or \autoref{Proposition: Alternative polynomial bound for fixed ABCD on increase of Graver degree}. 
    \STATE Let $\ve l' = \max\{\ve l, \floor{\ve r_\epsilon - (n\cdot
      \ell+\epsilon)\ve1}\}$ and $\ve u' = \min\{\ve u,
    \ceil{\ve r_\epsilon + (n\cdot \ell+\epsilon)\ve1}\}$. 
    \STATE Let $k = \|\ve u'-\ve l'\|_\infty$. 
    \STATE Using the algorithm of \autoref{Theorem: ABCD}\,(a), find a
    feasible solution~$\ve z_0$ for the restricted convex integer 
    minimization problem 
    $$\min\left\{f(\vez):\FourBlock[N]{C}{D}{B}{A}\vez=\veb,\;\vel'\leq\vez\leq\veu',\;\vez\in\Z^{n_B+Nn_A}\right\}.$$
    %%%%%%%%%%%%%%%%%%%%%%%%%%%% GREEDY GRAVER-VARIANTE: %%%%%%%%%%%%%%%%%%%%
    \STATE Solve the problem to optimality using the algorithm of
    \autoref{Theorem: Restricted optimization over 4-block N-fold matrix in
      poly-time}.  
    \ENDIF
  \end{algorithmic}
\end{algorithm}

By analyzing this algorithm, we now prove the main theorem of this paper.

\begin{proof}[Proof of \autoref{Theorem: ABCD convex}]
  We first show that \autoref{algo:graver-proximity} is correct. 
  If the continuous relaxation $\CP$ is infeasible or unbounded, then so is
  the problem $\IP$.  In the following, assume that $\CP$ has an optimal
  solution.  Then there exists an optimal solution~$\ve{\hat r}$ to $\CP$
  with $\|\ve{\hat r} - \ve{r}_\epsilon\|_\infty \leq \epsilon$.  By
  \autoref{th:proximity}, there
  exists an optimal solution~$\ve z^*$
  of the integer optimization problem~$\IP$ with
  \begin{math}
    \| \ve{\hat r} - \ve z^* \|_\infty \leq n \cdot \ell.
  \end{math}
  By the triangle inequality, this solution then satisfies $\| \ve z^* -
  \ve{r}_\epsilon \|_\infty \leq n\cdot \ell + \epsilon$ and is therefore a
  feasible solution to the restricted IP with variable bounds $\ve l'$ and
  $\ve u'$.  Thus it suffices to solve the restricted IP to optimality, 
  %%%%%%%%%%%%%%%%%%%%%%%%%%%% GREEDY GRAVER-VARIANTE: %%%%%%%%%%%%%%%%%%%%
  which is done with the algorithm of \autoref{Theorem: Restricted
    optimization over 4-block N-fold matrix in poly-time}. 
    
  The algorithm has the claimed complexity because $$k \leq 2((n_B+Nn_A)\cdot\ell +
  \epsilon)$$ is bounded polynomially in $N$ by \autoref{Proposition:
    Polynomial bound for fixed ABCD on increase of Graver degree} or
  \autoref{Proposition: Alternative polynomial bound for fixed ABCD on
    increase of Graver degree}.
  The complexity then follows from \autoref{Theorem: Restricted optimization
    over 4-block N-fold matrix in poly-time}.  
\end{proof}

%\clearpage
\subsection*{Acknowledgments.} We wish to thank R\"udiger Schultz for valuable
comments and for pointing us to \cite{Gollmer+Gotzes+Schultz}.  We also would
like to thank Shmuel Onn for pointing us toward the paper by Hochbaum and
Shanthikumar.  The second author was supported by grant DMS-0914873 of the
National Science Foundation.  A part of this work was completed during a stay
of the three authors at BIRS.

We %would like to 
dedicate this paper to the memory of Uri Rothblum.  His paper
\cite{Onn+Rothblum} has been an inspiration for our work on nonlinear
discrete optimization.  As a coauthor of R.H.\ and R.W.\ in
\cite{DeLoera+Hemmecke+Onn+Rothblum+Weismantel:09}, Uri contributed to the
application of Graver basis techniques for block-structured problems.  We
believe that the present paper continues the theme of research at the
interface of algebra, geometry, combinatorics, and optimization that Uri
appreciated.

\bibliography{/home/mkoeppe/w/siam-book/mkoeppe-bib/nl-and-eng,/home/mkoeppe/w/siam-book/mkoeppe-bib/hemmecke,/home/mkoeppe/w/siam-book/mkoeppe-bib/weismantel,/home/mkoeppe/w/siam-book/book}
\bibliographystyle{amsabbrv}

\end{document}